\documentclass[12pt, reqno]{amsart}
\usepackage{amssymb,latexsym}
\newcommand{\RNum}[1]{\lowercase\expandafter{\romannumeral #1\relax}}
\usepackage{xcolor}
\usepackage{makecell}
\usepackage{amsthm}
\usepackage{amsmath}
\usepackage{hyperref}
\usepackage{esvect}
\usepackage{mathrsfs}
\hypersetup{colorlinks=true,linkcolor=blue,urlcolor=blue,citecolor=red}
\newtheorem{proof 1}{proof 1}
\newtheorem{theorem}{Theorem}[section]

\newcommand{\beq}{\begin{equation}}
\newcommand{\eeq}{\end{equation}}

\newtheorem{definition}{Definition}[section]
\newtheorem*{equiv*}{Equivalence Relation}

\newtheorem{corollary}{Corollary}[section]

\def\sts#1#2{\begin{Bmatrix}
#1 \\ #2 \end{Bmatrix}}
\title[]{Partial degenerate Stirling numbers}

	\author{Be\'ata B\'enyi}
	\address{\noindent Faculty of Water Sciences, University of Public Service, Baja, HUNGARY}
	\email{benyi.beata@uni-nke.hu}
	
	\author{Sithembele Nkonkobe}
	\address{\noindent School of Mathematics, University of Witwatersrand, 2050 Wits, Johannesburg, South Africa}
	\email{snkonkobe@gmail.com}

\date{\today}
\subjclass[2020]{11B73, 05A15, 05A18}
\keywords{set partitions, generalized Stirling numbers, associate Stirling numbers, restricted Stirling numbers}

\begin{document}\begin{abstract}
		In this paper, we study some combinations of the degenerate and incomplete Stirling numbers of the second kind. We use a combinatorial approach and provide some asymptotic results. 	\end{abstract}

        \maketitle

\section{Introduction}
 Stirling numbers of the second kind, $\sts{n}{k}$, occur in several areas of mathematics, such as combinatorics, analysis, algebra, probability theory. The basic combinatorial meaning is the number of ways the set $[n] = \{1,2,\ldots, n\}$  can be partitioned into $k$ non-empty subsets. Probably this is the reason for their frequent occurrences throughout the different mathematical branches. The number array can also be defined by the recursion that states for $0<k<n$
 \begin{align}\label{classic_recursion}
\sts{n+1}{k} = k\sts{n}{k} + \sts{n-1}{k}, 
 \end{align}
 with initial conditions $\sts{0}{0} = 1$ and $\sts{n}{0}=\sts{0}{n}=0$ for $n>0$.
The analytical importance is revealed by the generating function
\begin{align}\label{classical_genfun}
		\sum\limits_{n=0}^\infty \sts{n}{k}\frac{x^n}{n!}=\frac{\begin{pmatrix}e^x-1\end{pmatrix}^k}{k!}.
\end{align}
Numerous articles have addressed the introduction and study of generalizations of Stirling numbers for decades \cite{Barati19, broder1984r,hsu1998unified,mansour2011new,munagi2016combinatorial,simsek2007q}. Moreover, efforts to unify different approaches were made again and again in order to clarify the connections and the differences. In this way, the literature of the Stirling numbers is growing like a great tree. In the case of special functions, one direction is the study of the incomplete versions of the functions, the other direction is the study of the degenerate versions of the functions. We focus on the connection of these two concepts for Stirling numbers of the second kind, emphasizing the combinatorial nature of our mathematical objects.

In the next section, we recall these two concepts, then we define the new type of Stirling numbers comprising both of them using a combinatorial approach, and in the remaining sections we investigate the properties of these numbers, in how far the properties of the generalizations may be kept further. 

Why is it worth studying such new generalizations and  combinations of ideas? This is what mathematics always does because the more we play around with conditions, the more we understand not only the underlying object but also the techniques that are useful in their study, and eventually it leads to the development of new techniques. Why this particular topic? As we outlined before, the importance of partitions cannot be questioned in mathematics. 

\section{Preliminaries}
\subsection{Incomplete Stirling numbers}
Restrictions on the size of the subsets in a partition arise in many applications, hence there was an obvious need to consider this in general using combinatorial tools.

The \emph{restricted} Stirling numbers $\sts{n}{k}_{\leq \ell}$ are defined as the number of partitions of $[n]$ into $k$ non-empty subsets such that each subset contains \emph{at most} $\ell$ elements \cite{komatsu2015incomplete,mezo2014periodicity,choi2006reciprocity}. The recursive relation \eqref{classic_recursion} modifies 
\begin{align}\label{restricted_recursion}
	\sts{n+1}{k}_{\leq \ell}=\sum\limits_{i=0}^{\ell-1}\binom{n}{i}\sts{n-i}{k-1}_{\leq \ell}.
\end{align}

Similarly, the number of partitions of $[n]$ into $k$ non-empty subsets such that each subset contains \emph{at least} $\ell$ elements is called the \emph{associated} Stirling number of the second kind and is denoted by $\sts{n}{k}_{\geq \ell}$ \cite{howard1980associated,kim2022degenerate, komatsu2015incomplete,mezo2014periodicity}. The basic recursion and the generating function of associated Stirling numbers of the second kind are as follows. Let $n\geq k\ell$, then

\begin{align*}
	\sts{n+1}{k}_{\geq \ell}=\sum_{i=\ell-1}^{n}\binom{n}{i}\sts{n-i}{k-1}_{\geq \ell},
\end{align*}

Using the following notation of incomplete exponential functions
\begin{align*}
e_{\leq\ell}(x)= \sum_{i =0}^{\ell} \frac{x^i}{i!} \qquad \mbox{and} \qquad e_{<\ell}(x) = \sum_{i=0}^{\ell-1}\frac{x^i}{i!},
\end{align*}
the generating functions for incomplete Stirling numbers of the second kind are \cite{komatsu2015incomplete}
\begin{align*}
	\sum_{n=0}^\infty  \sts{n}{k}_{\leq \ell}\frac{x^n}{n!}=\frac{(e_{\leq \ell}(x)-1)^k}{k!}
    \end{align*}
    \begin{align}\label{equation:26}
	\sum_{n=k\ell}^\infty \sts{n}{k}_{\geq \ell}\frac{x^n}{n!}=\frac{(e^x-e_{<\ell}(x))^k}{k!}.
\end{align}
\subsection{Degenerate Stirling numbers}
We recall now the notion on degenerate special numbers and functions.  Let $\lambda$ be any nonzero real number. 
The \emph{degenerate (or generalized)} falling factorial is defined as  
\begin{align*}
(t)_{n,\lambda} = t(t-\lambda)\cdots(t-(n-1)\lambda), \quad (t)_{0,\lambda} = 1.
\end{align*}
For $\lambda = 1$, we get the falling factorial $(t)_n = (t)_{n,1}= t(t-1)\cdots(t-n+1)$, while $\lim_{\lambda\to 0} (t)_{n,\lambda} = t^n$.  (Other special values for $\lambda$ lead to other often used combinatorial numbers, as for example $\lambda =-1$ gives the increasing factorial.)
The \emph{degenerate} Stirling numbers of the second kind can be defined by the relation
\begin{align*}
(t)_{n,\lambda} = \sum_{k=0}^{n} \sts{n}{k}_{\lambda} (t)_k.
\end{align*}
In order to have the generating function, let us recall the definition of the \emph{degenerate exponential function} \cite{KimKim2020}

\begin{align*}
e_{\lambda}^x(t) = (1+\lambda t)^{\frac{x}{t}} = \sum_{n=0}^{\infty} (x)_{n,\lambda} \frac{t^n}{n!}; \quad e_{\lambda}(t) = e^{1}_{\lambda}(t).
\end{align*}

Note that $\lim_{\lambda\to 0}e^{x}_{\lambda}(t) = e^{xt}$. 
Then we have 
\begin{align*}
\sum_{n=k}^{\infty} \sts{n}{k}_{\lambda}\frac{t^n}{n!} = \frac{(e_{\lambda}(t)-1)^k}{k!}.
\end{align*}
\subsection{Generalized Stirling numbers}
Hsu and Shiue \cite{hsu1998unified} introduced the so called Stirling pairs using a generalization of Stirling numbers as connecting coefficients between polynomials.  Let $\alpha,\beta,\gamma$ be complex numbers and $(\alpha,\beta,\gamma)\not = (0,0,0)$. Let $\sts{n}{k}_{\alpha, \beta, \gamma} $ denote the \emph{generalized Stirling numbers} defined by the relation
\begin{align*}
(t)_{n,\alpha} = \sum_{k=0}^{n}\sts{n}{k}_{\alpha, \beta, \gamma} (t-\gamma)_{k,\beta}.
\end{align*}
We adopt the notation introduced by Maltenfort \cite{Maltenfort2020} instead of the original $S(n,k,\alpha,\beta,\gamma)$ notation.
Then, the generating function for $\sts{n}{k}_{\alpha, \beta, \gamma}$ is  \cite{hsu1998unified}
 \begin{align*}
 \sum_{n=0}^{\infty} \sts{n}{k}_{\alpha, \beta, \gamma}\frac{t^n}{n!} = \frac{(e^{\beta}_{\alpha}(t)-1)^k}{\beta^kk!}e^{\gamma}_{\alpha}(t). 
 \end{align*}

Generalized Stirling numbers involve an amount of special generalizations that have arisen earlier (or later) as special cases. They were studied from many points of view,  combinatorial and probabilistic models were given, and combinatorial, analytical, and algebraic properties were derived.
The recursion \eqref{classic_recursion} has the form in the case of the generalized Stirling numbers as follows:

\begin{align*}
\sts{n+1}{k}_{\alpha, \beta, \gamma} = \sts{n}{k-1}_{\alpha, \beta, \gamma} + (k\beta -n\alpha + \gamma) \sts{n}{k}_{\alpha, \beta, \gamma}.
\end{align*}
Maltenfort \cite{Maltenfort2020} used this recursion to extend the set of parameter values, i.e., for negative $n$ and $k$ values, and for $\alpha$, $\beta$, $\gamma$ lying in any ring.  
Corcino \cite{corcino2001some} derived an elegant formula    for $n\geq 0$
\begin{align*}
\sts{n}{k}_{\alpha,\beta,\gamma} = \frac{\Delta^k(\beta t+ \gamma)_{\alpha,n}|_{t=0}}{\beta^kk!},
\end{align*}
where $\Delta^k$ denotes the forward difference. Considering the definition of forward difference, $\Delta^ka_n = \sum_{i=0}^{k} (-1)^{i}\binom{k}{i}a_{n+k-i}$, we see that the above formula is the translation of the relation:
\begin{align*}
 \sts{n}{k}_{\alpha,\beta,\gamma} =\frac{1}{\beta^kk!}\sum_{j=0}^{k}(-1)^{k-j}\binom{k}{j}(\beta j+\gamma)_{n,\alpha}.
 \end{align*}
The interested reader can dive into the deep sea of beautiful identities that can be found in the wide literature of generalized Stirling numbers. See, for example \cite{benyi2022unfair,corcino2001combinatorial,corcino2001some,kargin2018higher,Maltenfort2020,schork2024stirling} and references therein.

We recall one of the basic combinatorial models presented in \cite{benyi2022unfair}, which we want to extend in this current work.   
First, in order to talk about the properties of the partition easier, we introduce some simple expressions. We distribute elements into cells that have inner structures. 
Cells are separated into compartments and each compartment can contain only $1$ element. We refer to the number of compartments in a cell as \emph{order} of the cell.
We say that a cell has the \emph{$\alpha$-closing property} if their compartments have cyclic ordered numbering, each element is placed at a time, and for each available $\alpha$ compartments only the first compartment may be occupied by an element. 
\begin{definition}\cite{benyi2022unfair}\label{definition:10} Let $\alpha, \beta, \gamma, n, k$ be non-negative integers such that $\alpha|\gamma$, $\alpha|\beta$, and $k\leq n$.  
    By a $(\alpha, \beta, \gamma)$-partition we mean a partition of $[n]$ into $k+1$ cells with $\alpha$-closing properties such that 
    \begin{itemize}
    \item[1.] there is a cell of order $\gamma$ that may be empty,
    \item[2.] there are $k$ cells of order $\beta$ that are non-empty and each cell has a ,,favorite" compartments, i.e., the first element in the cell goes to this specific compartment. 
    \end{itemize}
    The generalized Stirling numbers, $\sts{n}{k}_{\alpha, \beta, \gamma}$, count the number of $(\alpha, \beta, \gamma)$-partitions of $[n]$ into $k+1$ subsets \cite{benyi2022unfair}.
\end{definition}
In the following proofs, we refer to the cell with $\gamma$ compartments as \emph{special cell} and to the cells with $\beta$ compartments as \emph{ordinary cells}.
We use the notation $\sts{n}{k}_{\alpha, \beta}$ for the special case $\gamma = 0$ in the remainder of the paper. 
 Throughout the literature, we have models that are not substantially different from this model; one can view them as reformulations. For more details, examples, and explanations and related studies, see \cite{Adell23, benyi2022unfair}. 

\subsection{Weighted mixed partitions}
We now introduce a model to interpret the generalized Stirling numbers which was not defined yet in the literature in this form. However, the idea is not novel. We explain what we mean below. 

Let $\mathcal{A}$ be a ring, $\alpha, \beta, \gamma$ elements in $\mathcal{A}$, $n$ and $k$ be integers. Let $(G, P_k)$ be a pair where $G$ is a possible empty set and $P_k=B_1/B_2/\ldots/B_k$ is a set partition with $k$ non-empty blocks. We associate the weights with the sets $G$ and $B_i$s as follows. The weight of $G$ is $(\gamma)_{|G|,\alpha}$, where $|G|$ denotes the size of $G$, i.e., the number of elements in $G$. Set $w(G)=1$, if $G$ is empty. The weight of a block $B_i$ is $w(B_i) = (\beta-\alpha)_{|B_{i}|-1,\alpha} = \frac{(\beta)_{|B_i|,\alpha}}{\beta}$. Note that $w(B) =1$ for a singleton block $B$. The weight of partition $P_k$ is the product of the weights of its blocks, i.e., $w(P_k)=\sum_{i=1}^{k} w(B_i)$. Set $w(P_0)=1$ for the empty partition. Let the weight of a pair $(G,P_k)$ defined as $w((G,P_k)) = w(G)w(P_k)$.
 Let $\Pi^*_{n,k}$  denote all distributions of $[n]$ in a possibly empty set, $G$,  and $k$ non-empty blocks, $P_k$. Then, the following combinatorial definition coincides with the generalized Stirling numbers 

\begin{align*}
\sts{n}{k}_{\alpha,\beta,\gamma} = \sum_{(G,P_k)\in \Pi^*_{n,k}} w((G,P_k)).
\end{align*}
As a demonstration of the usefulness of this definition, we recover some  statements about generalized Stirling numbers. 

\begin{itemize}
\item $\sts{n}{k}_{\alpha,\beta,\gamma}=0$, if $n<k$ and $\sts{n}{k}_{\alpha,\beta,\gamma}=1$, if $n=k$, since in this case we have a partition such that each block in $P_n$ is a singleton and $G$ is empty. Both the empty set $G$ and any singleton block have weight $1$. 
\item $\sts{n}{0}_{\alpha,\beta,\gamma} =(\gamma)_{n,\alpha}$, since if we have $k=0$, $w(P_0)=1$, and all elements are in the set $G$.
\item $\sts{n}{1}_{\alpha,\beta} = (\beta-\alpha)_{n-1,\alpha}$, since $G$ is empty and there is only one non-empty block of size $n$. 
\item $\sts{n+1}{k}_{\alpha,\beta}=\sts{n}{k-1}_{\alpha,\beta,\beta-\alpha}$. 
The left hand side of the identity count partitions of $[n+1]$ into $k$ non-empty blocks; pairs $(G, P_k)$ such that $G$ is empty. Consider the block $B$ that contains $1$. $w(B)= (\beta-\alpha)_{|B|-1,\alpha}$. Now, delete $1$ from it, $B' = B\setminus \{1\}$. Further, if we delete the block $B$ from the set partition $P_k$, we obtain a partition $P'_{k-1}$.  Together, $(B', P'_{k-1})$ construct a pair on $n$ elements such that $\gamma = \beta-\alpha$. 
\item $\sts{n}{n-1}_{\alpha,\beta,\gamma}= n\gamma+\binom{n}{2}(\beta-\alpha)$. The right hand side of this identity 
counts the pairs according to whether $G$ is empty or not. If $G$ is not empty, all sets contain only one element. There are $n$ ways to choose the element in $G$, and this set has the weight $\gamma$. On the other hand if $G$ is empty, then one of the blocks in $P_k$ has two elements, the others are singletons. Only the block with $2$ elements has weight other than 1, namely $\beta-\alpha$. There are $\binom{n}{2}$ possibilities to choose these two elements.  
\item $\sts{n}{k}_{c\alpha,c\beta,c\gamma} = c^{n-k}\sts{n}{k}_{\alpha,\beta,\gamma}$, since changing the parameters $\alpha$, $\beta$, $\gamma$ by a multiplication factor $c$ has an effect on the total weight as a multiplication factor $c^{n-k}$.  
\end{itemize}

The idea of mixed weighted partitions roots in previous results. Maltenfort \cite{Maltenfort2020} extends $\alpha, \beta,\gamma$ values to any ring. The authors \cite{Yaqubi2016} introduced \emph{mixed partition numbers} and as a particular case \emph{mixed Stirling numbers of the second kind}, the papers \cite{Barati19, BenyiYaqubi2019} investigate them in more detail. Finally, the notion of partition enumeration  with a given weight associated with the blocks goes back to Carlitz \cite{Carlitz_weighted}, who defined a version of Stirling numbers with $\lambda^j$ as a weight on a $j$ element block. The authors \cite{Belbachir2014, Belbachir2021} make use of weights for their studies and an even more general idea is the \emph{vector weighted Stirling numbers} introduced by the authors of \cite{Esmaeeli2021}. 

The connection with $(\alpha, \beta, \gamma)$-partitions is that the weights correspond to the inner structure of the cells. In this sense, the mixed weighted partitions are abstracter compared to the $(\alpha,\beta,\gamma)$-partitions. However, this approach makes enumerations in certain cases easier because it is closer to the original set partition definition.

\section{Incomplete degenerate Stirling numbers of the second kind}
We consider now two possibilities to combine the two concepts, degenerateness and incompleteness of Stirling numbers of the second kind. 
\subsection{Degenerate restricted Stirling numbers of the second kind}

\begin{definition}\label{definition:3} Let $\sts{n}{k}_{\alpha, \beta, \gamma}^{\leq \ell}$ denote the number of $(\alpha,\beta,\gamma)$-partitions of $[n]$ in $k+1$ cells such that $k$ ordinary cells contain at most $\ell $ elements.  
\end{definition}

Alternatively, let $\Pi^{\leq \ell}_{n,k}$  denote all distributions of $[n]$ into a possibly empty set, $G$,  and $k$ non-empty blocks, $P_k$, such that each block has cardinality at most $\ell$. Then, we have

\begin{align*}
\sts{n}{k}_{\alpha,\beta,\gamma}^{\leq \ell}= \sum_{(G,P_k)\in \Pi^{\leq \ell}_{n,k}} w((G,P_k)).
\end{align*}

First, let us show the fundamental recursive relation,
\begin{theorem}\label{theorem:2_1} We have

\begin{align}\label{basic_recursion}
\sts{n+1}{k}_{\alpha, \beta, \gamma}^{\leq \ell} = \gamma \sts{n}{k}_{\alpha, \beta, \gamma-\alpha}^{\leq \ell} + \sum_{i=k-1}^{n-\ell-1}\binom{n}{i}(\beta-\alpha)_{n-i,\alpha}\sts{i}{k-1}_{\alpha, \beta,\gamma}^{\leq \ell}.
\end{align}
\end{theorem}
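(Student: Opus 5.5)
We prove \eqref{basic_recursion} combinatorially in the weighted mixed partition model, using $\sts{n}{k}_{\alpha,\beta,\gamma}^{\leq \ell}=\sum_{(G,P_k)\in \Pi^{\leq \ell}_{n,k}} w((G,P_k))$. Take a pair $(G,P_k)$ on $[n+1]$ and split according to where the element $n+1$ lies: either in the possibly empty set $G$, or in one of the $k$ non-empty blocks of $P_k$. The two terms on the right-hand side of \eqref{basic_recursion} should correspond to these two cases.

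\emph{First case: $n+1\in G$.} Put $G'=G\setminus\{n+1\}$. The factorization $(\gamma)_{m+1,\alpha}=\gamma\,(\gamma-\alpha)_{m,\alpha}$, with $m=|G'|$, shows that $w(G)=\gamma\cdot w'(G')$. Here $w'$ is the weight of the special set with parameter $\gamma-\alpha$ in place of $\gamma$. The blocks of $P_k$ are untouched, so the size restriction $\leq \ell$ is preserved. The map $(G,P_k)\mapsto (G',P_k)$ is therefore a weight-preserving bijection, up to the factor $\gamma$, onto $\Pi^{\leq\ell}_{n,k}$ with parameters $(\alpha,\beta,\gamma-\alpha)$. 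This gives $\gamma\sts{n}{k}^{\leq \ell}_{\alpha,\beta,\gamma-\alpha}$.

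\emph{Second case: $n+1\in B$ for some block $B$ of $P_k$.} Let $i$ be the number of elements of $[n]$ not in $B$. We choose these $i$ elements in $\binom{n}{i}$ ways, and $B$ consists of $n+1$ together with the other $n-i$ elements. Then $|B|=n-i+1$ and $w(B)=(\beta-\alpha)_{|B|-1,\alpha}=(\beta-\alpha)_{n-i,\alpha}$. The remaining $i$ elements carry an arbitrary pair $(G,P_{k-1})$ with parameters $(\alpha,\beta,\gamma)$ and blocks of size at most $\ell$, which contributes $\sts{i}{k-1}^{\leq\ell}_{\alpha,\beta,\gamma}$. Weights are multiplicative, so summing over $i$ produces terms of the form $\binom{n}{i}(\beta-\alpha)_{n-i,\alpha}\sts{i}{k-1}^{\leq\ell}_{\alpha,\beta,\gamma}$. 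The lower limit $i=k-1$ comes from the vanishing of $\sts{i}{k-1}$ for $i<k-1$, since $k-1$ non-empty blocks need at least $k-1$ elements.

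\emph{Main obstacle: the upper limit.} The delicate step is identifying exactly which $i$ are admissible, that is, translating the constraint on the size $n-i+1$ of the block $B$ into the range of $i$. The bookkeeping must be done carefully and then matched with the endpoint $n-\ell-1$ written in \eqref{basic_recursion}. I would first write out the admissible set of $i$ explicitly as the intersection of $\{k-1,\dots,n\}$ with the size condition on $B$. I would then check the extreme cases $n-i+1=1$ and $n-i+1=\ell$ against small values, for example $k=1$ with $\sts{n+1}{1}^{\leq\ell}$. This determines precisely how the stated upper limit is to be read.

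Once the range is settled, adding the two cases gives \eqref{basic_recursion}.
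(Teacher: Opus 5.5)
\textbf{Verdict: there is a genuine gap, and it sits in the step you postponed.} Your two-case decomposition is the same as the paper's. The paper phrases it with cells: $n+1$ in the special cell gives $\gamma$ times the count with parameter $\gamma-\alpha$, and $n+1$ in an ordinary cell with $n-i$ companions gives $\binom{n}{i}(\beta-\alpha)_{n-i,\alpha}$ times the count on the other $i$ elements. Both of your cases are argued correctly in the weighted model.

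The problem is the summation range, and it cannot be settled the way you intend. The only constraint on $B$ is $|B|=n-i+1\le\ell$. This is a \emph{lower} bound $i\ge n-\ell+1$. The top of the range is $i=n$, given by the singleton $B=\{n+1\}$, which is always admissible. Hence your argument proves
\begin{align*}
\sts{n+1}{k}_{\alpha,\beta,\gamma}^{\leq \ell}=\gamma\sts{n}{k}_{\alpha,\beta,\gamma-\alpha}^{\leq \ell}+\sum_{i=\max(k-1,\,n-\ell+1)}^{n}\binom{n}{i}(\beta-\alpha)_{n-i,\alpha}\sts{i}{k-1}_{\alpha,\beta,\gamma}^{\leq \ell}.
\end{align*}
With $j=n-i$, the second term is $\sum_{j=0}^{\ell-1}\binom{n}{j}(\beta-\alpha)_{j,\alpha}\sts{n-j}{k-1}^{\leq\ell}_{\alpha,\beta,\gamma}$. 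This is the direct analogue of \eqref{restricted_recursion}, and it is also what coefficient extraction from the generating function gives.

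The printed range $k-1\le i\le n-\ell-1$ instead collects exactly the blocks with $|B|\ge\ell+2$, which are forbidden, and it omits every admissible block. No reading of the upper limit repairs this, because the factor $(\beta-\alpha)_{n-i,\alpha}$ already fixes $n-i$ as the number of companions of $n+1$. For a concrete counterexample, take $n=k=\ell=2$ and $\gamma=0$; this fails even in the classical case $\alpha=0$, $\beta=1$. The left side is $3(\beta-\alpha)$, coming from $G=\emptyset$ and $P_2\in\{\{1,2\}/\{3\},\ \{1,3\}/\{2\},\ \{2,3\}/\{1\}\}$. The right side as stated is $0$, because the sum is empty.

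So your final sentence, that settling the range and adding the two cases gives \eqref{basic_recursion}, is false. Carried out carefully, your bookkeeping step refutes the displayed identity and establishes the corrected one above. The paper's own proof has the same defect, since it never examines the summation limits either. The right conclusion is to state and prove the recursion with the corrected range and record the counterexample, not to try to match the endpoint $n-\ell-1$.
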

\begin{proof}
The left hand side counts $(\alpha,\beta,\gamma)$-partitions of $[n+1]$ into $k+1$ cells. 
The element $n+1$ is either in the special cell or in one of the ordinary $k$ cells. If it is in the special cell, we choose the compartment in $\gamma$ ways, and the remaining elements can be partitioned in $\sts{n}{k}_{\alpha, \beta, \gamma-\alpha}^{\leq \ell}$ ways. On the other hand, if the element $n+1$ is in an ordinary cell, let $i$ be the number of elements that are not in the same cell as $n+1$, choose them in $\binom{n}{i}$ ways. Furthermore, assume $n+1$ is in the favorite compartment, then distribute the $n-i$ elements that join the cell with $n+1$ in $(\beta-\alpha)(\beta-2\alpha)\cdots(\beta-(n-i)\alpha)$ ways. Finally, the $i$ elements construct a $(\alpha, \beta, \gamma)$-partition into $k-1$ non-empty ordinary cells.
\end{proof}

The only substantial difference between identity \eqref{restricted_recursion}  and its generalized version \eqref{basic_recursion} is in the part dealing with the special cell and in an additional factor $(\beta-\alpha)_{n-i,\alpha}$ which arises because of the additional inner structure of a cell.

Let us denote the incomplete degenerate exponential function as follows:
\begin{align*}
e_{\lambda;\leq \ell}^x (t) = \sum_{n=0}^{\ell} (x)_{n,\lambda}\frac{t^n}{n!}.
\end{align*}
The generating function of $\sts{n}{k}_{\alpha, \beta, \gamma}^{\leq \ell}$ is given as follows
\begin{align*}
\sum_{n=0}^{\infty} \sts{n}{k}_{\alpha, \beta, \gamma}^{\leq \ell} \frac{x^n}{n!} = e^{\gamma}_{\alpha}(x) \frac{\left(e^{\beta}_{\alpha;\leq \ell}(x)-1\right)^k}{\beta^kk!}.
\end{align*}
Note that for $\gamma =0$, we have 
\begin{align*}
\sum_{n=0}^{\infty} \sts{n}{k}_{\alpha, \beta}^{\leq \ell} \frac{x^n}{n!} = \frac{\left(e^{\beta}_{\alpha;\leq \ell}(x)-1\right)^k}{\beta^kk!}.
\end{align*}

Let $A(x)$ denote the generating function for short.  
The first derivative of the generating function according to the variable $x$
\begin{align*}
\frac{d}{dx}A(x) = \sum_{n=0}^{\infty} \sts{n+1}{k}_{\alpha, \beta, \gamma}^{\leq \ell} \frac{x^n}{n!}.
\end{align*}
On the other hand, 
\begin{align*}
\frac{d}{dx}A(x) = \gamma e^{\gamma-\alpha}_{\alpha}(x)\frac{(e^{\beta}_{\alpha;\leq \ell}(x)-1)^k}{\beta^kk!} + e^{\gamma}_{\alpha}(x)\frac{(e^{\beta}_{\alpha;\leq \ell}(x)-1)^{k-1}}{\beta^{(k-1)}(k-1)!}\frac{d}{dx}\frac{(e^{\beta}_{\alpha; \leq \ell}(x)-1)}{\beta},
\end{align*}
where 
\begin{align*}\frac{d}{dx}\frac{(e^{\beta}_{\alpha; \leq \ell}(x)-1)}{\beta} = \sum_{n=0}^{\infty}\sts{n+1}{1}_{\alpha,\beta}^{\leq \ell}\frac{x^n}{n!},
\end{align*}
and since 
\begin{align*}
\sts{n}{1}_{\alpha, \beta}^{\leq \ell} =\left\{\begin{matrix} 0, & \mbox{if  } n>\ell\\
(\beta-\alpha)_{n;\alpha} & \mbox{if  } n\leq \ell.
\end{matrix}\right.
\end{align*} we end up with another proof of the recursion in Theorem \ref{theorem:2_1}. 

A three term recurrence is given in the next theorem.
\begin{theorem} We have
\begin{align*}
\sts{n}{k}_{\alpha,\beta, \gamma}^{\leq \ell} &= \gamma\sts{n}{k}_{\alpha,\beta,\gamma-\alpha}^{\leq \ell} + \gamma\sum_{i=k-1}^{\ell}\binom{n}{i}(\beta-\alpha)_{n-i+1,\alpha}\sts{i-1}{k-1}_{\alpha, \beta, \gamma-\alpha}^{\leq \ell}\\&+ \sum_{i=k-1}^{\ell} \binom{n}{i}(\beta-\alpha)_{n-i+1, \alpha}\sum_{j=0}^{i-1}\binom{i-1}{j}(\beta-\alpha)_{i-j,\alpha}\sts{j}{k-2}_{\alpha,\beta,\gamma}^{\leq \ell}.
\end{align*}
\end{theorem}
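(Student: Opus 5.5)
The plan is to iterate the basic recursion of Theorem \ref{theorem:2_1} once more, equivalently to repeat the combinatorial decomposition in its proof at depth two. I will use the weighted-pair model $(G,P_k)$ throughout, since there the weights factor cleanly: a block $B$ contributes $(\beta-\alpha)_{|B|-1,\alpha}$ and $G$ contributes $(\gamma)_{|G|,\alpha}$.

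\emph{Step 1 (first decomposition).} Fix the largest element and ask where it lies.

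If it lies in $G$, peeling it off contributes the factor $\gamma$ and leaves a pair with special parameter shifted to $\gamma-\alpha$. This produces the first term $\gamma\sts{\cdot}{k}_{\alpha,\beta,\gamma-\alpha}^{\leq \ell}$.

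Otherwise it lies in an ordinary block $B$. We choose the other elements of $B$ with a binomial coefficient; the restriction $|B|\le \ell$ is what cuts the summation range at $\ell$. The block contributes a factor $(\beta-\alpha)_{\cdot,\alpha}$. What remains is a weighted pair with $k-1$ ordinary blocks, still carrying special parameter $\gamma$.

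\emph{Step 2 (second decomposition).} Apply the same dichotomy to the remaining configuration with $k-1$ blocks, using its largest element.

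If that element is in $G$, we get a factor $\gamma$ and a shift to $\gamma-\alpha$. This yields the middle term, $\gamma\sum\binom{n}{i}(\beta-\alpha)_{\cdot,\alpha}\sts{i-1}{k-1}_{\alpha,\beta,\gamma-\alpha}^{\leq\ell}$.

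If it is in an ordinary block, we choose its $i-1-j$ companions in $\binom{i-1}{j}$ ways with weight $(\beta-\alpha)_{i-j,\alpha}$. The rest is a pair with $k-2$ blocks on $j$ elements and parameter $\gamma$. This gives the double sum.

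Algebraically, this is the same as substituting Theorem \ref{theorem:2_1} (in the form obtained from the proof, before the special-cell case is absorbed) into its own second term. As a cross-check, I will compare with the generating function $e^{\gamma}_{\alpha}(x)\bigl(e^{\beta}_{\alpha;\leq\ell}(x)-1\bigr)^k/(\beta^k k!)$. Differentiating once splits off $\gamma e^{\gamma-\alpha}_\alpha$ and a factor $\frac{d}{dx}(e^\beta_{\alpha;\le\ell}-1)/\beta$. Applying the product rule again to the remaining factor $e^{\gamma}_\alpha(x)(e^\beta_{\alpha;\le\ell}-1)^{k-1}$ produces exactly three pieces, matching the three terms.

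\emph{Main obstacle: bookkeeping of indices.} The main difficulty is getting the index conventions to agree with the displayed formula. The left side is written with $n$ rather than $n+1$. Some factorial lengths appear as $n-i+1$ and $i-j$, and the sums run over $i$ up to $\ell$ with lower limit $k-1$. I expect these to correspond to a particular labelling, namely $i$ being the number of elements outside the first peeled block, with the block-size constraint $\le\ell$ translated into a range on $i$. I will first carry out the decomposition with my own clean indices. I will then re-index term by term, checking small cases such as $k=1$ and $k=2$ with small $n$ and $\ell$, to align with the stated ranges and arguments. If a discrepancy persists, I will state the corrected form that the double decomposition actually yields.
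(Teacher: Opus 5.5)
Your two-level split is the paper's proof. The paper also says that algebraically one applies Theorem~\ref{theorem:2_1} twice. Its three cases are: $n+1$ in the special cell; $n+1$ in an ordinary cell with the largest remaining element $\epsilon$ in the special cell; and $n+1$ in an ordinary cell with $\epsilon$ in an ordinary cell.

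The step you defer as bookkeeping is where the real problem lies, and your fallback is the one you will need. No relabelling can reconcile your split with the displayed identity, because that identity is false as printed. Concretely:

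(i) The first term already accounts for a largest element plus $n$ others, so the left side must be $\sts{n+1}{k}_{\alpha,\beta,\gamma}^{\leq\ell}$. For $\alpha=0$, $\beta=1$, $\gamma=0$, $k=2$, $\ell\geq n$, the printed right side is $\sum_{i\geq1}\binom{n}{i}=2^n-1=\sts{n+1}{2}$, not $\sts{n}{2}=2^{n-1}-1$.

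(ii) A block of size $m$ has weight $(\beta-\alpha)_{m-1,\alpha}$, so the factors must be $(\beta-\alpha)_{n-i,\alpha}$ and $(\beta-\alpha)_{i-j-1,\alpha}$. The paper's proof counts these blocks as $\sts{n-i+1}{1}_{\alpha,\beta}^{\leq\ell}$ and $\sts{i-j}{1}_{\alpha,\beta}^{\leq\ell}$. However, its formula $\sts{n}{1}_{\alpha,\beta}^{\leq\ell}=(\beta-\alpha)_{n;\alpha}$ contradicts the earlier, correct $\sts{n}{1}_{\alpha,\beta}=(\beta-\alpha)_{n-1,\alpha}$. For $\alpha=1$, $\beta=2$, $\gamma=0$, $k=2$, $n=3$, $\ell\geq2$, every printed summand vanishes, while $\sts{3}{2}_{1,2,0}^{\leq\ell}=\sts{4}{2}_{1,2,0}^{\leq\ell}=3$.

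(iii) The size restriction concerns the block of $n+1$, which has $n-i+1$ elements. So $i$ must satisfy $i\geq n+1-\ell$, not $i\leq\ell$, and the inner sum needs $j\geq i-\ell$. With (i) and (ii) fixed, $\alpha=0$, $\beta=1$, $\gamma=0$, $k=2$, $\ell=1$, $n=2$ gives $2$ instead of $\sts{3}{2}_{0,1,0}^{\leq 1}=0$.

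The printed range $k-1\leq i\leq n-\ell-1$ in Theorem~\ref{theorem:2_1} also misplaces the restriction: it forces the block of $n+1$ to have at least $\ell+2$ elements. So iterate that recursion as its proof derives it, not as it is displayed.

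For $k\geq2$, your decomposition actually proves
\begin{align*}
\sts{n+1}{k}_{\alpha,\beta,\gamma}^{\leq \ell} &= \gamma\sts{n}{k}_{\alpha,\beta,\gamma-\alpha}^{\leq \ell} + \gamma\sum_{i=\max(1,n+1-\ell)}^{n}\binom{n}{i}(\beta-\alpha)_{n-i,\alpha}\sts{i-1}{k-1}_{\alpha,\beta,\gamma-\alpha}^{\leq \ell}\\
&\quad+\sum_{i=\max(1,n+1-\ell)}^{n}\binom{n}{i}(\beta-\alpha)_{n-i,\alpha}\sum_{j=\max(0,i-\ell)}^{i-1}\binom{i-1}{j}(\beta-\alpha)_{i-j-1,\alpha}\sts{j}{k-2}_{\alpha,\beta,\gamma}^{\leq \ell}.
\end{align*}

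For $k=1$ you must add $(\beta-\alpha)_{n,\alpha}$ when $n+1\leq\ell$. When $i=0$ there is no element $\epsilon$ to split on, so the configuration with $G$ empty and the single block $[n+1]$ falls outside all three cases; the paper's proof overlooks it as well.

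Your generating-function cross-check exhibits the same term. Put $C(x)=e^{\gamma}_{\alpha}(x)\bigl(e^{\beta}_{\alpha;\leq\ell}(x)-1\bigr)^{k-1}/(\beta^{k-1}(k-1)!)$. Expressing the coefficients of $C$ through $C'$ recovers $C$ only up to $C(0)$, which equals $1$ when $k=1$. So you get the three pieces plus $C(0)\frac{d}{dx}\bigl(e^{\beta}_{\alpha;\leq\ell}(x)-1\bigr)/\beta$, not exactly three.
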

\begin{proof} Algebraically, we get the theorem by applying the previous identity twice. In the following, we provide a combinatorial argument. 

\textbf{Case 1:} The $(n+1)$th element goes into the special cell. There are $\gamma$ ways to choose a compartment for the $(n+1)$th element which closes $\alpha-1$ further compartments, hence; the remaining $n$ elements form an $(\alpha, \beta, \gamma -\alpha)$-partition which can be done in $\sts{n}{k}_{\alpha, \beta, \gamma-\alpha}^{\leq \ell}$ ways.
	
\textbf{Case 2:} The $(n+1)$th element goes in an ordinary cell together with $n-i$ other elements. Also, from the $i$ remaining elements the maximal element (call it $\epsilon$) is in the cell having $\gamma$ compartments. There are $\binom{n}{i}$ ways to choose $i$ elements that are not in the same cell as $(n+1)$. The $n-i$ other elements, and $(n+1)$ may be distributed in the cell in $\sts{n-i+1}{1}_{\alpha,\beta}^{\leq \ell}$ ways. As from the $i$ elements $\epsilon$ is in the cell having $\gamma$ compartments, there are $\gamma$ ways of placing $\epsilon$ in the cell. The remaining $i-1$ elements may be distributed in the $k$ cells including the one with $\gamma$ compartments in $\sts{i-1}{k-1}_{\alpha,\beta,\gamma}^{\leq \ell}$ ways. 

\textbf{Case 3:}  The element $n+1$ is in one of the ordinary  $k$ cells each together with $n-i$ other elements. Also, from the $i$ remaining elements the maximal element ($\epsilon$) is also in an ordinary cell.  The $n-i$ other elements, together with $n+1$ may be distributed  in $\sts{n-i+1}{1}_{\alpha,\beta}^{\leq \ell}$ ways. From the $i$ elements besides $\epsilon$ we can choose in $\binom{i-1}{j}$ ways, $j$ elements which are not in the same cell as $\epsilon$. The $j$ elements may be distributed in $k-1$ cells including the special cell in $\sts{j}{k-2}_{\alpha,\beta,\gamma}^{\leq \ell}$ ways. Now, $\epsilon$ together with the $i-j-1$ other elements may be distributed into the cell in $\sts{i-j}{1}_{\alpha,\beta}^{\leq \ell}$ ways.  
\end{proof}

\subsection{Generalized associated Stirling numbers}
Similarly, we can define $\sts{n}{k}_{\alpha,\beta,\gamma}^{\geq \ell}$  as the number of $(\alpha,\beta,\gamma)$-partitions such that ordinary cells contain at least $\ell$ elements, or as mixed weighted partitions, i.e., the distribution of $[n]$ into $(G, P_k)$ pairs such that the blocks of $P_k$ contain at least $\ell$ elements. We omit the details of the study of these numbers. 
We turn our attention to partitions into cells with unlimited compartments. We call a cell whose compartments are unlimited a \emph{free cell}. (Actually, free cells correspond to the special case $\alpha =0$, $\beta =1$). Otherwise, as before, a cell has compartments limited to $1$ element. 
We introduce and investigate the following generalization od Stirling numbers in this section. 

\begin{definition}
Let $n,k,\ell,\gamma$ non-negative integers such that $k,\ell\leq n$. Let $\sts{n}{k}_{\gamma}^{>\ell}$ denote the number of partitions of $[n]$ in $k$ non-empty free cells containing at least $\ell+1$ elements and a possible empty free cell with $\gamma$ compartments.  
\end{definition}

We can verify that $\sts{n}{k}^{> \ell}_\gamma$ counts mixed weighted partitions $(G, P_k)$ of $[n]$ with weight $w((G,P_k)) = \gamma^{|G|}$ such that each block in $P_k$ has more than $\ell$ elements.

\begin{theorem}\label{theorem:3}We have
	\begin{align}\label{equation:24}
		\sts{n}{k}_{\geq \ell} = \sum_{i=0}^n(-1)^{i}\gamma^{i}\binom{n}{i}\sts{n-i}{k}^{>\ell-1}_\gamma.
	\end{align}
\end{theorem}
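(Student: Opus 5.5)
We first express $\sts{n}{k}^{>\ell-1}_\gamma$ through the associated Stirling numbers, and then invert that relation by binomial inversion. By the mixed weighted partition description given after the definition, a pair $(G,P_k)$ of $[n]$ in which every block of $P_k$ has at least $\ell$ elements carries weight $\gamma^{|G|}$. To build such a pair, we choose the set $G$ of size $j$ in $\binom{n}{j}$ ways. The remaining $n-j$ elements must then form a partition into $k$ blocks, each of size at least $\ell$, which can be done in $\sts{n-j}{k}_{\geq \ell}$ ways. This gives
\begin{align*}
\sts{n}{k}^{>\ell-1}_\gamma=\sum_{j=0}^{n}\binom{n}{j}\gamma^{j}\sts{n-j}{k}_{\geq \ell}.
\end{align*}
Equivalently, on the level of exponential generating functions, we have $\sum_n \sts{n}{k}^{>\ell-1}_\gamma \frac{x^n}{n!}=e^{\gamma x}\frac{(e^x-e_{<\ell}(x))^k}{k!}$ by \eqref{equation:26}.

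To invert, we multiply this generating function by $e^{-\gamma x}$. This recovers $\frac{(e^x-e_{<\ell}(x))^k}{k!}$, which by \eqref{equation:26} is the generating function of $\sts{n}{k}_{\geq \ell}$. We then compare coefficients of $x^n/n!$ in the product $e^{-\gamma x}\cdot\sum_m \sts{m}{k}^{>\ell-1}_\gamma\frac{x^m}{m!}$. The coefficient is the binomial convolution $\sum_i\binom{n}{i}(-\gamma)^i\sts{n-i}{k}^{>\ell-1}_\gamma$, which is exactly \eqref{equation:24}.

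The same inversion can also be done directly by substituting the first display into the right-hand side of \eqref{equation:24}. After the substitution, we collect the terms with $i+j=m$ and use the identity $\binom{n}{i}\binom{n-i}{j}=\binom{n}{m}\binom{m}{i}$. The inner sum becomes $\gamma^m\sum_{i}(-1)^i\binom{m}{i}=\gamma^m[m=0]$, so only the $m=0$ term survives and leaves $\sts{n}{k}_{\geq\ell}$.

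For a purely combinatorial proof, one could present this as a sign-reversing involution. Consider triples consisting of a signed set $I$ with weight $(-\gamma)^{|I|}$ together with a pair $(G,P_k)$ on $[n]\setminus I$. The involution toggles the smallest element of $I\cup G$ between $I$ and $G$; each toggle changes the sign and keeps the weight. The fixed points are the triples with $I\cup G=\emptyset$, and these are counted by $\sts{n}{k}_{\geq\ell}$. The main care needed is the index shift: the condition ``$>\ell-1$'' means blocks of size at least $\ell$, and this shift is what makes the counts match.
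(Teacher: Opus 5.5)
Your proof is correct and is essentially the paper's argument. The paper applies inclusion--exclusion to the elements of $G$, reading $\binom{n}{i}\gamma^i\sts{n-i}{k}^{>\ell-1}_\gamma$ as the weight of pairs with a marked $i$-subset forced into $G$. Your sign-reversing involution is exactly that argument made explicit, and your binomial inversion of $\sts{n}{k}^{>\ell-1}_\gamma=\sum_j\binom{n}{j}\gamma^j\sts{n-j}{k}_{\geq\ell}$ is its algebraic form. Your version is also more careful than the paper's one-line sketch: its phrase ``partitions having at least $i$ elements in $G$'' is only right if read as this marked-subset count, not as a literal count.
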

\begin{proof}The weight of all partitions of $[n]$  having at least $i$ elements in $G$ is $\binom{n}{i}\gamma^{i}\sts{n-i}{k}^{>\ell-1}_\gamma$. After applying the inclusion-exclusion principle, we have \ref{equation:24}.  
	
\end{proof}	
By \eqref{equation:24} and \eqref{equation:26} we have
\begin{corollary}\label{corollary:1}

	\begin{align*}\label{equation:4}
		\sum\limits_{n=kl}^\infty \sts{n}{k}^{>\ell}_\gamma\frac{x^n}{n!}=\frac{e^{\gamma x}\left(e^x-e_{\leq\ell}(x)\right)^k}{k!}.
	\end{align*}
\end{corollary}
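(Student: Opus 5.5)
The plan is to read the generating function off the combinatorial description of $\sts{n}{k}^{>\ell}_\gamma$ as mixed weighted pairs $(G,P_k)$ with weight $\gamma^{|G|}$, and to use Theorem \ref{theorem:3} together with \eqref{equation:26} only as a consistency check. Directly, a pair $(G,P_k)$ on $[n]$ is a labelled product of two structures. The first is a set $G$, with exponential generating function $\sum_{m\ge 0}\gamma^m x^m/m! = e^{\gamma x}$. The second is an unordered collection of $k$ blocks, each of size at least $\ell+1$, with exponential generating function $(e^x-e_{\leq \ell}(x))^k/k!$. This second factor is exactly \eqref{equation:26} with $\ell$ replaced by $\ell+1$, since $e_{<\ell+1}=e_{\leq\ell}$. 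Multiplying exponential generating functions (the binomial convolution $\sum_i \binom{n}{i}\gamma^i\sts{n-i}{k}_{\geq \ell+1}$) gives the claimed formula, i.e. the identity $\sts{n}{k}^{>\ell}_\gamma=\sum_i\binom{n}{i}\gamma^i\sts{n-i}{k}_{\geq\ell+1}$.

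To stay close to the paper's wording ("by \eqref{equation:24} and \eqref{equation:26}"), I would also verify this algebraically from Theorem \ref{theorem:3}, applied with $\ell$ replaced by $\ell+1$. Setting $F(x)=\sum_n \sts{n}{k}^{>\ell}_\gamma x^n/n!$, the right-hand side of \eqref{equation:24} is the binomial convolution of $(-\gamma)^i$ with $\sts{n-i}{k}^{>\ell}_\gamma$. Its exponential generating function is therefore $e^{-\gamma x}F(x)$. Equating this with \eqref{equation:26} gives $e^{-\gamma x}F(x)=(e^x-e_{\leq\ell}(x))^k/k!$, and multiplying by $e^{\gamma x}$ yields the corollary.

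The main obstacle is bookkeeping rather than substance. First, the shift $\ell\mapsto\ell+1$ must be handled consistently. Second, as printed, \eqref{equation:24} appears to have the inversion direction reversed relative to the natural identity above. The inclusion--exclusion form should be the inverse, with alternating signs giving $\sts{n}{k}_{\geq\ell}$ from the $\gamma$-weighted numbers. I will trust the direct combinatorial convolution and, if needed, note that either sign convention leads to the same corollary once the inverse relation is used. Finally, the lower summation index $k\ell$ is harmless: the terms with $n<k(\ell+1)$ vanish, and the series may be taken from $n=0$.
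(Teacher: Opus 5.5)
Your proof is correct, but your primary argument is not the paper's route. The paper derives the corollary only from \eqref{equation:24} and \eqref{equation:26}. Taking exponential generating functions in Theorem~\ref{theorem:3}, with $\ell$ replaced by $\ell+1$, gives $e^{-\gamma x}F(x)=(e^x-e_{\leq\ell}(x))^k/k!$, and one then multiplies by $e^{\gamma x}$; this is exactly your second paragraph. Your main argument instead reads $F(x)$ off directly as a labelled product. The factors are the $\gamma$-weighted set $G$, with series $e^{\gamma x}$, and $k$ unordered blocks of size at least $\ell+1$. In coefficients this is the forward convolution $\sts{n}{k}^{>\ell}_\gamma=\sum_i\binom{n}{i}\gamma^i\sts{n-i}{k}_{\geq\ell+1}$. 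Your route is more elementary and self-contained: it needs neither the weighted inclusion--exclusion nor an inversion, and it gives the convolution identity as a by-product. The paper's route proves the binomial inverse of this convolution and then undoes the inversion. Its only advantage is that it reuses Theorem~\ref{theorem:3}.

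One correction to your bookkeeping remarks: \eqref{equation:24} is not reversed. As printed it is exactly the alternating-sign inverse of your convolution. It comes from inclusion--exclusion over the elements forced into $G$, each contributing a factor $-\gamma$, which is why your second paragraph works with it verbatim. The sign is also not a matter of convention. With $\gamma^i$ in place of $(-1)^i\gamma^i$, the same computation would produce $e^{-\gamma x}$ instead of $e^{\gamma x}$. So you should drop the hedge that ``either sign convention leads to the same corollary''.
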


\begin{theorem}
We have 
\begin{align*}
\sts{n+1}{k}_{\gamma}^{>\ell} = \gamma\sts{n}{k}_{\gamma}^{>\ell} + \sum_{i=0}^{n} \gamma^{i}\binom{n}{i}\sts{n-i}{k}_{\geq \ell+1}.
\end{align*}
\end{theorem}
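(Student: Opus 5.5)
We classify the mixed weighted partitions $(G,P_k)$ of $[n+1]$ counted by $\sts{n+1}{k}^{>\ell}_\gamma$ by where the largest element $n+1$ lies. This is the same first-element decomposition used in the proof of Theorem~\ref{theorem:2_1}. Recall that $\sts{n}{k}^{>\ell}_\gamma$ is the total weight of pairs $(G,P_k)$ with $w=\gamma^{|G|}$ and every block of $P_k$ of size at least $\ell+1$. Equivalently, choosing $G$ first gives
\begin{align*}
\sts{n}{k}^{>\ell}_\gamma=\sum_{i=0}^{n}\gamma^i\binom{n}{i}\sts{n-i}{k}_{\geq \ell+1},
\end{align*}
which is the basic building block. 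It is the inverse form of Theorem~\ref{theorem:3}.

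\textbf{Case 1:} $n+1\in G$. Its compartment is chosen in $\gamma$ ways, since the special cell is free. Removing $n+1$ leaves an arbitrary admissible pair on $[n]$, so this case contributes $\gamma\sts{n}{k}^{>\ell}_\gamma$.

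\textbf{Case 2:} $n+1$ lies in an ordinary block. We choose the set $G\subseteq[n]$ of size $i$ in $\binom{n}{i}$ ways, with weight $\gamma^i$. The elements of $[n]\setminus G$ together with $n+1$ must then be partitioned into $k$ blocks, each of size at least $\ell+1$, and these are counted by associated Stirling numbers. Summing over $i$ gives the second term. We then add the two cases.

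As a cross-check, I would differentiate the generating function of Corollary~\ref{corollary:1}:
\begin{align*}
\frac{d}{dx}\frac{e^{\gamma x}(e^x-e_{\leq \ell}(x))^k}{k!}=\gamma\, \frac{e^{\gamma x}(e^x-e_{\leq \ell}(x))^k}{k!}+e^{\gamma x}\frac{d}{dx}\frac{(e^x-e_{\leq\ell}(x))^k}{k!}.
\end{align*}
I would then compare coefficients, using \eqref{equation:26} with $\ell+1$ in place of $\ell$ for the second factor.

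The main obstacle is the bookkeeping of the index in the associated Stirling number in Case~2. The residual set has $n-i$ elements of $[n]$ plus the element $n+1$. The coefficient extraction from the derivative above produces $\sts{n+1-i}{k}_{\geq\ell+1}$, as does the combinatorial count. Matching this with the printed $\sts{n-i}{k}_{\geq\ell+1}$ therefore requires reading the summand as counting partitions of the $n-i$ remaining elements of $[n]$ augmented by $n+1$. I would first test the identity at small values, for example $k=1$, $\ell=0$, $n=1$. This fixes which reading of the index is intended, and the combinatorial argument above is then written accordingly.
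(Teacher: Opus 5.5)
Your two-case decomposition by the position of $n+1$ is the same as the paper's argument. Your Case~2 count is the right one: the $k$ blocks are formed from the $n-i$ leftover elements of $[n]$ together with $n+1$, which gives $\sts{n+1-i}{k}_{\geq\ell+1}$, and your generating-function check confirms this. The paper's own proof contains exactly the slip you suspect: it partitions ``the remaining $n-i$ elements'', and even says ``at most $\ell$''. So the printed identity is misindexed.

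The gap is your final step. You cannot ``read'' $\sts{n-i}{k}_{\geq\ell+1}$ as a partition of $n-i$ elements augmented by $n+1$. The symbol has a fixed meaning, and with that meaning the statement is false.

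Your own building-block identity shows this immediately. The printed sum $\sum_{i=0}^{n}\gamma^i\binom{n}{i}\sts{n-i}{k}_{\geq\ell+1}$ is exactly $\sts{n}{k}^{>\ell}_\gamma$, so the printed claim says $\sts{n+1}{k}^{>\ell}_\gamma=(1+\gamma)\sts{n}{k}^{>\ell}_\gamma$. Your proposed test $k=1$, $\ell=0$, $n=1$ then gives $1+2\gamma$ on the left and $1+\gamma$ on the right.

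So you should say explicitly that the identity holds with $\sts{n+1-i}{k}_{\geq\ell+1}$ in place of $\sts{n-i}{k}_{\geq\ell+1}$, and then prove that corrected version. Any of these proves it:
\begin{itemize}
\item your combinatorial argument;
\item the derivative of the generating function in Corollary~\ref{corollary:1};
\item Pascal's rule $\binom{n+1}{i}=\binom{n}{i}+\binom{n}{i-1}$ applied to the building-block identity.
\end{itemize}
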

\begin{proof}
Consider a mixed weighted partition of $n+1$ elements such that each block has more than $\ell$ elements and weight $w((G, P_k)=\gamma^{|G|})$. If $n+1$ is in $G$,  the remaining $n$ elements form a mixed weighted partition, $P'$ with weight $\sts{n}{k}_{\gamma}^{> \ell}$. Putting the element $n+1$ into $G$, the weight $w(P')$ increases by a factor $\gamma$.  On the other hand, if $n+1$ is in a block of $P_k$, the weight will not change. Let $i$ be the size of $G$. Then we have $w(G) = \gamma^i$.  The remaining $n-i$ elements can be partitioned into blocks of size at most $\ell$ in $\sts{n-i}{k}_{\geq \ell+1}$ ways. 
\end{proof}

The numbers $\sts{n}{k}_{\gamma}^{>\ell}$ are closely related to \emph{associated $r$-Stirling numbers}. These numbers count set partitions into non-empty subsets such that certain distinguished elements, say, $\{1,2,\ldots, r\}$ are in different blocks and each block has more than $\ell$ elements. Enumeration results can be found in \cite{BenyiMendez2019}. 

However, these two types of partition do not coincide, since in our definition there is no condition on the size of $G$. A corresponding definition would be the following. Set partitions of $[n+r]$ into $k+r$ blocks such that $\{1,2,\ldots, r\}$ are in distinct blocks, the blocks that do not contain distinguished elements contain at least $\ell$ elements while the blocks containing a distinguished element have at least $1$ element. Such partitions are enumerated by $\sts{n}{k}_r^{>\ell-1}$.

\section{Partial degenerate partitions}\label{section:4}
In the previous sections, we presented some concepts of  generalizations and some examples how these concepts can be combined. In this section, we demonstrate a new idea for combining these notions. We emphasize that our definition is one variation of the many possibilities.

\begin{definition}\label{definition:4}
Let $n,k,\ell, \alpha,\beta,\gamma$ non-negative integers such that $\alpha|\gamma$, $\alpha|\beta$, $k\leq n$, $\ell\leq n$. 
Consider the distribution of $[n]$ into $k+1$ cells that satisfies the following conditions. 
\begin{itemize}
	\item[1.] There is a free cell of order $\gamma$ that may be empty. 
	\item[2.] There are $k$ non-empty cells of order 1 or of order $\beta$. 
	\item[3.] Cells of order $1$ are free and have more than $\ell$ elements
	\item[4.] Cells of order $\beta$ have the $\alpha$-closing property and contain at most $\ell$ elements.
\end{itemize}
We call  these partitions \emph{$\ell$-restricted $(\alpha,\beta,\gamma)$-partial degenerate partitions} and denote by $S^{\gamma,\alpha,\beta}_{n,k,\ell}$ their number.
\end{definition}

In short, we will call these partitions as \emph{partial degenerate partitions}. 

Note that in the language of weighted mixed partition $S^{\gamma,\alpha,\beta}_{n,k,\ell}$ count partitions of $[n]$ into pairs $(G, P_k)$, where $w(G) = \gamma^{|G|}$, and $w(B_i) = 1$, if $|B_i|\leq \ell$ and $w(B_i) = (\beta-\alpha)_{|B_i|-1,\alpha}$ if $|B_i|>\ell$.

 \begin{theorem}\label{theorem:13}For $ n,k, \ell,\alpha, \beta, \gamma$ non-negative integers, we have
	\begin{equation}\label{equation:5}
S^{\gamma,\alpha,\beta}_{n,k,\ell}=\sum\limits_{i=0}^n\sum\limits_{j=0}^k\binom{n}{i}\sts{i}{j}_\gamma^{> \ell}\sts{n-i}{k-j}_{\alpha,\beta}^{\leq \ell}. 
	\end{equation}
\end{theorem}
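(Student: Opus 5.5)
The identity is a convolution over how the elements of $[n]$ split between two kinds of structure, so I would prove it by a direct bijective decomposition. Given an $\ell$-restricted $(\alpha,\beta,\gamma)$-partial degenerate partition of $[n]$ as in Definition \ref{definition:4}, I separate its cells into two groups.
\begin{itemize}
\item The first group consists of the free special cell of order $\gamma$ together with the free cells of order $1$. Each of these order-$1$ cells has more than $\ell$ elements.
\item The second group consists of the cells of order $\beta$ with the $\alpha$-closing property. Each of these has at most $\ell$ elements.
\end{itemize}
Let $i$ be the number of elements lying in the first group and $j$ the number of order-$1$ cells. Then $n-i$ elements lie in the second group, distributed over $k-j$ cells of order $\beta$.

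The first step is to identify the first group with an object counted by $\sts{i}{j}_\gamma^{>\ell}$. Relabel its $i$ elements order-preservingly by $[i]$. We obtain $j$ non-empty free cells, each with more than $\ell$ elements, plus a possibly empty free cell with $\gamma$ compartments. This is exactly the definition of $\sts{i}{j}_\gamma^{>\ell}$. In weighted language it is a pair $(G,P_j)$ with weight $\gamma^{|G|}$ whose blocks all exceed $\ell$.

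The second step is to identify the second group with an object counted by $\sts{n-i}{k-j}_{\alpha,\beta}^{\leq \ell}$. After relabelling, its $n-i$ elements form $k-j$ non-empty $\beta$-cells with the $\alpha$-closing property, each of size at most $\ell$, and there is no special cell. This is Definition \ref{definition:3} with $\gamma=0$. I would note that $\gamma=0$ forces the special cell to be empty, since $(0)_{m,\alpha}=0$ for $m\ge1$, so the count is exactly $\sts{n-i}{k-j}_{\alpha,\beta}^{\leq \ell}$.

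Conversely, a partial degenerate partition is reconstructed uniquely from three pieces of data: a choice of $i$-subset of $[n]$ (in $\binom{n}{i}$ ways), a structure of the first kind on that subset, and a structure of the second kind on its complement. The resulting map is a bijection, and weights multiply because the weight of a partition is the product of the weights of its cells. Summing over $i$ and $j$ gives \eqref{equation:5}; terms with $j>k$ or with too few elements vanish automatically.

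The main obstacle is reconciling the size conventions, which must be done carefully. Definition \ref{definition:4} is worded with order-1 cells having more than $\ell$ elements and $\beta$-cells at most $\ell$. The weighted reformulation stated right after it, however, assigns weight $1$ to blocks with $|B_i|\le\ell$ and degenerate weight to blocks with $|B_i|>\ell$, which is the reverse. I would follow the cell definition, since that is what makes the two factors match $\sts{i}{j}_\gamma^{>\ell}$ and $\sts{n-i}{k-j}^{\leq\ell}_{\alpha,\beta}$, and remark that the weighted description should be read consistently with it. I would also check the boundary cases $i=0$ and $i=n$, and $j=0$ and $j=k$, using $\sts{0}{0}=1$.

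As a sanity check, the identity is equivalent to the product of exponential generating functions: $e^{\gamma x}(e^x-e_{\le\ell}(x))^j/j!$ from Corollary \ref{corollary:1}, times $(e^\beta_{\alpha;\le\ell}(x)-1)^{k-j}/(\beta^{k-j}(k-j)!)$, summed over $j$. This agrees with the binomial convolution.
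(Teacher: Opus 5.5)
Your proposal is correct and follows the paper's own argument. You choose the $i$ elements lying in the free cells in $\binom{n}{i}$ ways, count the free part (special cell plus $j$ order-$1$ cells) by $\sts{i}{j}_\gamma^{>\ell}$ and the $k-j$ cells of order $\beta$ by $\sts{n-i}{k-j}_{\alpha,\beta}^{\leq \ell}$, and sum over $i,j$. Your remark that the weighted reformulation after Definition~\ref{definition:4} has the size conditions reversed is accurate, and your reading of it agrees with the generating function in Theorem~\ref{theorem:20}.
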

\begin{proof}
Let $i$ the number of elements in free cells. Let us choose these elements in $\binom{n}{i}$ ways. Let $j$ be the number of free  cells (not of order $\gamma$). Then we can form from these $i$ elements in ${i\brace j}_\gamma^{> \ell}$ ways free cells in our partition. The remaining $n-i$ element however are distributed into the cells of order $\beta$ with $\alpha$-closing property and restriction on the size in $\sts{n-i}{k-j}_{\alpha,\beta}^{\leq \ell}$ ways.
\end{proof}

We can recover a connection to a recently studied type of partitions. Namely, in \cite{mansour2024set} the following generalization of the Stirling numbers 

\begin{align*}
	\sum_{n=0}^\infty S^{(r,s)}_{n,k}\frac{x^n}{n!}=\frac{e^{rx}}{k!}\left(
		e^x+(s-1)x-1
	\right)^k.
\end{align*}
was introduced as the number of partitions of $[n+r]$ into $k+r$ subsets such that non-special singleton blocks are colored with one of $s$ colors. 
In our combinatorial model for $S^{\gamma,\alpha,\beta}_{n,k,\ell}$ elements occupying compartments can alternately be interpreted as elements colored by a certain number of colors. In Definition~\ref{definition:4} for $(\alpha,\beta)=(0,1)$ when we disregard the condition ``the first element in each cell goes into a favorite compartment" we can easily establish a one to one correspondence between the partitions with colored singleton whose number is $S^{(r,s)}_{n,k}$ discussed in \cite{mansour2024set} and our model of partial degenerate partitions. We omit the details.  

\begin{theorem}For Let $n,k,\ell, \alpha,\beta,\gamma$ non-negative integers such that $\alpha|\gamma$, $\alpha|\beta$, $k\leq n$, $\ell\leq n$, we have
	\begin{align*}
		S^{\gamma,\alpha,\beta}_{n+1,k,\ell}= \sum_{i=0}^n\sum_{j=0}^k\binom{n}{i}\left[\sts{i+1}{j}_\gamma^{> \ell}\sts{n-i}{k-j}_{\alpha,\beta}^{\leq \ell}+ \sts{i}{j}_{\gamma}^{>\ell}\sts{n-i+1}{k-j}_{\alpha,\beta}^{\leq \ell}\right].
	\end{align*}
\end{theorem}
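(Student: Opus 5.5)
We argue combinatorially, in the spirit of the proof of Theorem~\ref{theorem:13}, by tracking where the element $n+1$ goes in a partial degenerate partition of $[n+1]$. The split is the dichotomy already used in that proof. Either $n+1$ lies among the elements placed in free cells (the special free cell of order $\gamma$ or a free cell of order $1$ with more than $\ell$ elements), or $n+1$ lies in a cell of order $\beta$ with the $\alpha$-closing property and at most $\ell$ elements. The two summands in the bracket should correspond exactly to these two cases.

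\textbf{Case 1: $n+1$ is in the free part.} Let $i$ be the number of elements of $[n]$ that lie in free cells together with $n+1$ (counting all free cells, including the special one). Choose them in $\binom{n}{i}$ ways. The $i+1$ elements consisting of these and $n+1$ form, with $j$ non-special free cells, one of $\sts{i+1}{j}_\gamma^{>\ell}$ configurations. The other $n-i$ elements fill the $k-j$ cells of order $\beta$ in $\sts{n-i}{k-j}_{\alpha,\beta}^{\leq \ell}$ ways.

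\textbf{Case 2: $n+1$ is in the degenerate part.} Let $i$ now be the number of elements of $[n]$ in free cells, chosen in $\binom{n}{i}$ ways. They form $\sts{i}{j}_\gamma^{>\ell}$ configurations. The remaining $n-i$ elements together with $n+1$ are distributed into $k-j$ restricted $\beta$-cells in $\sts{n-i+1}{k-j}_{\alpha,\beta}^{\leq \ell}$ ways.

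Summing over $i$ and $j$ and adding the two cases gives the right-hand side. The key point in both cases is that only the labels of the chosen sets matter: the counts $\sts{\cdot}{\cdot}_\gamma^{>\ell}$ and $\sts{\cdot}{\cdot}_{\alpha,\beta}^{\leq\ell}$ depend on the cardinality of the ground set and not on its elements. Hence distinguishing $n+1$ simply shifts the ground set size by one in the appropriate factor.

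Alternatively, we can obtain the identity from \eqref{equation:5} directly. Applying Theorem~\ref{theorem:13} with $n+1$ in place of $n$ and using Pascal's rule $\binom{n+1}{i}=\binom{n}{i}+\binom{n}{i-1}$ yields two sums. Reindexing $i\mapsto i+1$ in the sum with $\binom{n}{i-1}$ gives exactly the first term of the bracket, and the sum with $\binom{n}{i}$ gives the second. The boundary terms vanish: the term $i=n+1$ of the second sum pairs with $\sts{0}{k-j}_{\alpha,\beta}^{\le\ell}$, whose factor is handled by the convention $\binom{n}{n+1}=0$.

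The main obstacle is this bookkeeping of summation ranges and boundary conventions. The statement sums $i$ only up to $n$, so we must check that the terms with $i=n+1$ (in $\binom{n+1}{i}$) and $i=-1$ are zero. We also need the natural conventions $\sts{0}{0}=1$ and $\sts{m}{0}_{\alpha,\beta}^{\le \ell}=\delta_{m0}$. Both follow from the definitions, and we would state them explicitly.
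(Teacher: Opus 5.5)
Your combinatorial argument, which splits on whether $n+1$ lies among the free-cell elements or in a cell of order $\beta$ and uses the fact that both counts depend only on the cardinality of the ground set, is exactly the paper's proof and is correct. Your alternative route is also valid and makes transparent that the identity is just \eqref{equation:5} for $n+1$ combined with $\binom{n+1}{i}=\binom{n}{i}+\binom{n}{i-1}$: the boundary terms vanish because $\binom{n}{n+1}=\binom{n}{-1}=0$, so the extra conventions you list are not actually needed.
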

\begin{proof}The proof is based on the position of the $(n+1)$st element. Let $i$ be the number of elements that are distributed in free cells besides $n+1$ and let us choose these elements in $\binom{n}{i}$ ways.  If $n+1$ is also in a free cell, let $j$ be the number of free cells, then the free cells can be formed in $\sts{i+1}{j}_{\gamma}^{> \ell}$ ways, while the remaining $n-i$ elements can construct the remaining $k-j$ cells of order $\beta$ in $\sts{n-i}{k-j}_{\alpha,\beta}^{\leq \ell}$ ways. If $n+1$ is in a cell of order $\beta$, the free cells can be formed in $\sts{i}{j}_{\gamma}^{>\ell}$ and the cells of order $\beta$ in $\sts{n-i+1}{k-j}_{\alpha,\beta}^{\leq \ell}$. 
\end{proof}

Next, we provide the generating function.

\begin{theorem}\label{theorem:20}For $n,k,\ell, \alpha,\beta,\gamma$ non-negative integers such that $\alpha|\gamma$, $\alpha|\beta$, $k\leq n$, $\ell\leq n$,  we have
	\begin{align}\label{equation:1}
	\sum_{n=0}^{\infty}S^{\gamma,\alpha,\beta}_{n,k,\ell} \frac{x^n}{n!}=\frac{e^{\gamma x}}{k!}\left(e^x+\sum_{i=1}^{\ell}\frac{(\beta|\alpha)_i}{\beta}\frac{x^i}{i!}-\sum_{i=0}^{\ell}\frac{x^i}{i!}\right)^k.
\end{align}
\end{theorem}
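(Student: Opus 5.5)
The plan is to read Theorem~\ref{theorem:13} as a convolution identity and turn it into a product of exponential generating functions. The right-hand side of \eqref{equation:5} has two parts. In $n$ it is a binomial convolution $\sum_i\binom{n}{i}a_i b_{n-i}$. In the number of blocks it is an ordinary convolution $\sum_j$. So I multiply \eqref{equation:5} by $x^n/n!$, sum over $n\ge 0$, and interchange the finite sum over $j$ with the sum over $n$. This gives
\begin{align*}
\sum_{n\ge 0}S^{\gamma,\alpha,\beta}_{n,k,\ell}\frac{x^n}{n!}=\sum_{j=0}^{k}\Bigl(\sum_{i\ge 0}\sts{i}{j}_\gamma^{>\ell}\frac{x^i}{i!}\Bigr)\Bigl(\sum_{m\ge 0}\sts{m}{k-j}_{\alpha,\beta}^{\leq \ell}\frac{x^m}{m!}\Bigr).
\end{align*}
This is just the standard fact that the product of two exponential generating functions corresponds to binomial convolution of their coefficient sequences.

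Next I substitute the two known generating functions. By Corollary~\ref{corollary:1}, the first factor is $e^{\gamma x}\bigl(e^x-e_{\le \ell}(x)\bigr)^j/j!$. The second factor is the $\gamma=0$ case of the generating function of the degenerate restricted Stirling numbers stated after Theorem~\ref{theorem:2_1}, namely $\bigl(e^\beta_{\alpha;\le\ell}(x)-1\bigr)^{k-j}/\bigl(\beta^{k-j}(k-j)!\bigr)$. Set
\begin{align*}
A(x)=e^x-\sum_{i=0}^{\ell}\frac{x^i}{i!},\qquad B(x)=\frac{e^\beta_{\alpha;\le\ell}(x)-1}{\beta}=\sum_{i=1}^{\ell}\frac{(\beta)_{i,\alpha}}{\beta}\frac{x^i}{i!}.
\end{align*}
Here $(\beta|\alpha)_i$ in \eqref{equation:1} is read as the degenerate falling factorial $(\beta)_{i,\alpha}$. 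The sum then becomes $e^{\gamma x}\sum_{j=0}^k \frac{A^jB^{k-j}}{j!(k-j)!}$. By the binomial theorem this equals $\frac{e^{\gamma x}}{k!}(A+B)^k$, which is exactly \eqref{equation:1}.

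The main obstacle is checking that the two ingredient generating functions really apply as used; the algebra above is routine.
\begin{itemize}
\item Corollary~\ref{corollary:1} is stated with summation starting at $n=k\ell$. I must confirm that the lower-order coefficients vanish, so that the sum can be taken over all $i\ge 0$. A set of $i$ elements forming $j$ blocks each of size more than $\ell$ needs $i>j\ell$, so the missing coefficients are zero.
\item The ordinary blocks of size at most $\ell$ must carry weight $(\beta-\alpha)_{|B|-1,\alpha}=(\beta)_{|B|,\alpha}/\beta$, as in Definition~\ref{definition:4} and Theorem~\ref{theorem:13}. This is what makes $B(x)$ the single-block generating function.
\item The special cell must contribute $\gamma^{|G|}$, giving $e^{\gamma x}$, rather than a degenerate factor $e^{\gamma}_\alpha(x)$.
\end{itemize}

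If any of these identifications looks shaky, I would instead argue directly with the exponential formula for weighted mixed partitions. The special set $G$ contributes $e^{\gamma x}$. The $k$ unordered blocks contribute $\frac{1}{k!}\bigl(\sum_{m\ge1} w_m x^m/m!\bigr)^k$, where $w_m=1$ for $m>\ell$ and $w_m=(\beta)_{m,\alpha}/\beta$ for $1\le m\le\ell$. This inner sum is precisely $A+B$, and it gives the same result independently of Corollary~\ref{corollary:1}.
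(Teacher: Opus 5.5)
Your proposal is correct and follows the paper's proof exactly. The paper multiplies Theorem~\ref{theorem:13} by $x^n/n!$, interchanges the sums so that the binomial convolution becomes a product of Corollary~\ref{corollary:1} with the $\gamma=0$ restricted degenerate generating function, and then collapses $\sum_j\binom{k}{j}A^jB^{k-j}$ with the binomial theorem. Your side checks are sound: the vanishing of the coefficients below $k\ell$, reading $(\beta|\alpha)_i$ as $(\beta)_{i,\alpha}$, and putting the weight $(\beta)_{m,\alpha}/\beta$ on blocks of size at most $\ell$. That last choice agrees with Definition~\ref{definition:4} and with the formula, even though the paper's weighted-partition remark states it the other way round.
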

\begin{proof} By Theorem~\ref{theorem:13}  we have
\begin{align*}
\sum\limits_{n=0}^\infty 	S^{\gamma,\alpha,\beta}_{n,k,\ell}\frac{x^n}{n!}&=	\sum_{n=0}^{\infty}\left[
		\sum_{i=0}^n\sum_{j=0}^k\binom{n}{i}\sts{i}{j}_{\gamma}^{> \ell}\sts{n-i}{k-j}_{\alpha,\beta}^{\leq \ell}
	\right]\frac{x^n}{n!} \\&=\sum_{j=0}^k	\sum_{m=0}^\infty\sts{m}{k}_{\gamma}^{>\ell}\frac{x^m}{m!}\sum_{t=0}^\infty\sts{t}{k-j}_{\alpha,\beta}^{\leq \ell}\frac{x^t}{t!}\\&=
		\frac{{e^{\gamma x}}}{k!}\sum_{j=0}^k{\binom{k}{j}}
			\left(e^x-\sum\limits_{i=0}^l\frac{x^i}{i!}\right)^j\left(
			\sum\limits_{i=1}^l\frac{(\beta|\alpha)_{i}}{\beta}
			\frac{x^i}{i!}\right)^{k-j}
        \\
        &= \frac{e^{\gamma x}}{k!}\left(e^x+\sum_{i=1}^\ell\frac{(\beta|\alpha)_i}{\beta}\frac{x^i}{i!}-\sum_{i=0}^{\ell}\frac{x^i}{i!}\right)^k
	\end{align*}
We changed the order of summation, substituted the generating functions from previous theorems, and finally applied the binomial theorem. 

\end{proof}

By Theorem~\ref{theorem:20} we have the following.
\begin{theorem}For $n,k,\ell, \alpha, \beta,\gamma$ are non-negative integers such that  $n\geq k\ell$,  we have
	\begin{align*}
S^{\gamma,\alpha,\beta}_{n,k,\ell}=\sum_{\underset{r_{k+1}\geq0}{\underset{r_1,r_2,\ldots,r_k\geq l}{ r_1+r_2+\cdots+r_{k+1}=n}}}\binom{n}{r_1,r_2,\ldots,r_{k+1}}S^{\gamma,\alpha,\beta}_{r_{k+1},0,\ell}\prod_{i=1}^{k}	S^{0,\alpha,\beta}_{i,1,\ell}.
	\end{align*}
\end{theorem}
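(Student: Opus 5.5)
The plan is to read the identity off the generating function of Theorem~\ref{theorem:20} by factoring its right-hand side into $k+1$ exponential generating functions and extracting the coefficient of $x^n/n!$ from the product. The factor of the $i$-th ordinary block is read as $S^{0,\alpha,\beta}_{r_i,1,\ell}$, since the product index must range over the block sizes. As literally written, with a constant factor $S^{0,\alpha,\beta}_{i,1,\ell}$, the identity does not match Theorem~\ref{theorem:20}. The expected obstacle is that the stated normalization and summation range do not match what this derivation gives, so the plan has to address that point explicitly.

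\textbf{Step 1: the factor $e^{\gamma x}$.} Setting $k=0$ in \eqref{equation:1} gives $\sum_{r\ge 0} S^{\gamma,\alpha,\beta}_{r,0,\ell}\,x^r/r! = e^{\gamma x}$, since then all elements lie in the free cell of order $\gamma$. So $e^{\gamma x}$ is the generating function of the factor $S^{\gamma,\alpha,\beta}_{r_{k+1},0,\ell}$, with $r_{k+1}\ge 0$.

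\textbf{Step 2: the bracket.} Setting $k=1$, $\gamma=0$ in \eqref{equation:1} gives
\[
F(x):=\sum_{r\ge 0} S^{0,\alpha,\beta}_{r,1,\ell}\frac{x^r}{r!}=e^x+\sum_{i=1}^{\ell}\frac{(\beta|\alpha)_i}{\beta}\frac{x^i}{i!}-\sum_{i=0}^{\ell}\frac{x^i}{i!}.
\]
Combinatorially, $S^{0,\alpha,\beta}_{r,1,\ell}$ is the weight of a single block of size $r$. It equals $1$ if $r>\ell$, equals the degenerate weight if $1\le r\le \ell$, and equals $0$ if $r=0$.

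\textbf{Step 3: coefficient extraction.} Write $e^{\gamma x}F(x)^k$ as a product of $k+1$ exponential series. Cauchy's product rule for exponential generating functions gives
\[
n!\,[x^n]\,e^{\gamma x}F(x)^k=\sum_{r_1+\cdots+r_{k+1}=n}\binom{n}{r_1,\ldots,r_{k+1}}S^{\gamma,\alpha,\beta}_{r_{k+1},0,\ell}\prod_{i=1}^{k}S^{0,\alpha,\beta}_{r_i,1,\ell}.
\]
Terms with some $r_i=0$ for $i\le k$ vanish, because $F(0)=0$.

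Combinatorially this is the decomposition behind Theorem~\ref{theorem:13}. Label the $k$ ordinary blocks $1,\dots,k$, let $r_i$ be the size of block $i$, and choose which elements go where with the multinomial coefficient. The free cell of order $\gamma$ then contributes $\gamma^{r_{k+1}}=S^{\gamma,\alpha,\beta}_{r_{k+1},0,\ell}$, and block $i$ contributes its weight $S^{0,\alpha,\beta}_{r_i,1,\ell}$.

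\textbf{Step 4: the obstacle.} Comparing with \eqref{equation:1}, this computation produces the sum divided by $k!$, and the natural range is $r_i\ge 1$ rather than $r_i\ge \ell$. The $k!$ counts the labelings of the blocks, and the range $r_i\ge 1$ reflects that blocks of size $1,\dots,\ell$ carry nonzero degenerate weight.

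I would state this discrepancy explicitly. The displayed formula holds as written in exactly two situations:
\begin{itemize}
\item[(a)] when the blocks are counted as ordered (labeled) and only the free blocks of size greater than $\ell$ are present;
\item[(b)] in the degenerate case where the weights $(\beta|\alpha)_i/\beta$ make the truncated part of $F$ vanish, so that $F=e^x-e_{\le\ell}(x)$.
\end{itemize}
In case (b), $S^{0,\alpha,\beta}_{r,1,\ell}=0$ for $r\le\ell$, so the range $r_i\ge\ell$ costs nothing. If neither reading is intended, the correct identity carries the factor $1/k!$ and the range $r_i\ge1$. The core proof of the paper's formula is the coefficient extraction in Step 3, carried out under its stated conventions.
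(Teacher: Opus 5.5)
Your Steps 1--3 follow the paper's own route. The paper's entire justification is ``By Theorem~\ref{theorem:20}'', i.e.\ extracting $n!\,[x^n]$ from $e^{\gamma x}F(x)^k/k!$, and your extraction is correct. What it proves is
\[
S^{\gamma,\alpha,\beta}_{n,k,\ell}=\frac{1}{k!}\sum_{\substack{r_1+\cdots+r_{k+1}=n\\ r_1,\ldots,r_k\ge 1,\; r_{k+1}\ge 0}}\binom{n}{r_1,\ldots,r_{k+1}}\,\gamma^{r_{k+1}}\prod_{i=1}^{k}S^{0,\alpha,\beta}_{r_i,1,\ell}.
\]
You are right that the printed formula needs all three repairs: the index $r_i$, the factor $1/k!$, and the range $r_i\ge 1$. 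The hypothesis $n\ge k\ell$ plays no role.

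What is wrong is Step 4's claim that the formula ``holds as written in exactly two situations''.

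\textbf{Case (b).} It cannot occur when $\ell\ge 1$. The $i=1$ term of the truncated sum is $\frac{(\beta|\alpha)_1}{\beta}x=x$, i.e.\ a singleton is a cell of order $\beta$ with weight $(\beta-\alpha)_{0,\alpha}=1$. So $S^{0,\alpha,\beta}_{1,1,\ell}=1$, and $F$ never reduces to $e^x-e_{\le\ell}(x)$. Even where the range happens to be harmless, (b) does nothing about the missing $1/k!$.

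\textbf{Case (a).} It silently changes the left-hand side to an ordered count with no $\beta$-cells, essentially $k!\sts{n}{k}^{>\ell}_{\gamma}$. So it is not an instance of the displayed identity for $S^{\gamma,\alpha,\beta}_{n,k,\ell}$.

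\textbf{The accurate picture.} The range $r_i\ge\ell$ agrees termwise with $r_i\ge 1$ only for $\ell\le 1$. For $\ell\ge 2$ it discards the terms with some $r_i=1$, which carry weight $1$. The factor $1/k!$ is indispensable for $k\ge 2$. Concretely, even with the index read as $r_i$:
\begin{itemize}
\item for $k=1$, $\ell=2$, $\gamma\ge 1$, $n\ge 2$, the printed right-hand side misses $n\gamma^{n-1}$;
\item for $k=2$, $\ell=1$, $n=2$, $\gamma=0$, it gives $2$, while $S^{0,\alpha,\beta}_{2,2,1}=1$.
\end{itemize}
So drop (a) and (b), and say plainly that the statement as printed is false. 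Then present the corrected identity above as what your coefficient extraction, and the paper's appeal to Theorem~\ref{theorem:20}, actually establishes.
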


By differentiating \eqref{equation:1} we have the following combinatorial identity.
\begin{theorem}For $r,\alpha\geq 0$, $n\geq k\ell$ and $\beta,k,\ll\in \mathbb{N}$ we have
	\begin{align*}
		S^{\gamma,\alpha,\beta}_{n+1,k,\ell}=\gamma S^{\gamma,\alpha,\beta}_{n,k,\ell}+\sum_{i=0}^n\binom{n}{i}S^{\gamma,\alpha,\beta}_{i,k,\ell}S^{0,\alpha,\beta}_{n-i+1,1,\ell}.
	\end{align*}
\end{theorem}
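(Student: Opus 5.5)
The plan is to differentiate the generating function of Theorem~\ref{theorem:20} term by term and compare coefficients of $x^n/n!$ on both sides, as the sentence before the statement suggests. Write
\[
F_\ell(x)=e^x+\sum_{i=1}^{\ell}\frac{(\beta|\alpha)_i}{\beta}\frac{x^i}{i!}-\sum_{i=0}^{\ell}\frac{x^i}{i!},
\qquad
A_k(x)=\sum_{n\ge 0}S^{\gamma,\alpha,\beta}_{n,k,\ell}\frac{x^n}{n!}=\frac{e^{\gamma x}}{k!}F_\ell(x)^k .
\]
The left-hand side of the statement is the coefficient of $x^n/n!$ in $A_k'(x)$.

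\textbf{Step 1 (product rule).} Differentiating gives
\[
A_k'(x)=\gamma A_k(x)+\frac{e^{\gamma x}}{k!}\,\frac{d}{dx}F_\ell(x)^k .
\]
The first summand contributes exactly $\gamma S^{\gamma,\alpha,\beta}_{n,k,\ell}$.

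\textbf{Step 2 (identify $F_\ell'$).} Setting $\gamma=0$, $k=1$ in \eqref{equation:1} shows $F_\ell(x)=\sum_m S^{0,\alpha,\beta}_{m,1,\ell}x^m/m!$. Hence
\[
F_\ell'(x)=\sum_{m\ge 0}S^{0,\alpha,\beta}_{m+1,1,\ell}\frac{x^m}{m!}.
\]
Combinatorially, this is the weight of a single block containing a distinguished element, either of size $>\ell$ with weight $1$, or of size $\le\ell$ with weight $(\beta-\alpha)_{m,\alpha}$.

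\textbf{Step 3 (the convolution term).} The remaining summand must be written as a product of a series in $S^{\gamma,\alpha,\beta}_{i,\cdot,\ell}$ with $F_\ell'$. The binomial convolution of exponential generating functions then produces $\sum_i\binom{n}{i}S^{\gamma,\alpha,\beta}_{i,\cdot,\ell}S^{0,\alpha,\beta}_{n-i+1,1,\ell}$.

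The matching combinatorial reading is the following. Take the element $n+1$. Either it lies in $G$, which gives the factor $\gamma$. Or it lies in an ordinary block together with $n-i$ other elements chosen in $\binom{n}{i}$ ways, and the other $i$ elements form the remaining structure.

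\textbf{Main obstacle: the index of the convolution factor.} The product rule gives $\frac{d}{dx}F_\ell^k/k! = F_\ell^{k-1}F_\ell'/(k-1)!$. The natural factor is therefore $e^{\gamma x}F_\ell^{k-1}/(k-1)!=A_{k-1}$, and the combinatorial removal of one block likewise leaves $k-1$ blocks. The statement as printed instead has the index $k$ in $S^{\gamma,\alpha,\beta}_{i,k,\ell}$.

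The first thing I would try is a small case check: take $k=1$ and $n=0$. The left side is $S^{\gamma,\alpha,\beta}_{1,1,\ell}=1$. On the right, the first term is $\gamma S^{\gamma,\alpha,\beta}_{0,1,\ell}=0$. The sum reduces to $i=0$, giving $S^{\gamma,\alpha,\beta}_{0,1,\ell}S^{0,\alpha,\beta}_{1,1,\ell}=0\cdot 1=0$. So the printed identity fails there, whereas the index $k-1$ gives $S^{\gamma,\alpha,\beta}_{0,0,\ell}\cdot 1=1$, as required.

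My conclusion is that the identity can be established only with $S^{\gamma,\alpha,\beta}_{i,k-1,\ell}$ in the sum. With the literal index $k$ it appears false, and this small case is the counterexample. If this check holds up, I would report it as a typo in the statement rather than claim a proof of the printed form.
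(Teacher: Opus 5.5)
Your argument is the paper's own proof: the paper only says the identity follows by differentiating \eqref{equation:1}. You are right that differentiation actually gives $A_k'=\gamma A_k+A_{k-1}F_\ell'$, so the sum must contain $S^{\gamma,\alpha,\beta}_{i,k-1,\ell}$ and the printed index $k$ is an error. One caveat is that your witness $k=1$, $n=0$ satisfies the hypothesis $n\ge k\ell$ only when $\ell=0$. To refute the statement within its stated hypotheses, take instead $\gamma=0$, $\ell=k=1$, $n=2$: then $F_1(x)=e^x-1$, so the left side is $S^{0,\alpha,\beta}_{3,1,1}=1$ while the printed right side is $\binom{2}{1}+\binom{2}{2}=3$.
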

\section{Asymptotics}
Here we derive some asymptotic results for the numbers $S^{\gamma,\alpha,\beta}_{n,k,\ell}$. The method we use is that developed in \cite{hsu1990power,hsu1991kind, nkonkobe2020combinatorial}. Let $\sigma(n)$ denote the set of partitions of integer $n$ represented by $1^{k_1}2^{k_2}\cdots n^{k_n}$ such that 
$1k_1+2k_2+\cdots+ nk_n=n$, $k_i\geq 0$, $i=1,\ldots, n$ and $k=k_1+k_2+\cdots k_n$.
The parameter $k$ denotes the number of parts of the partition. Let $\sigma(n,k)$ denote the set of partitions of $n$ with $k$ parts.  Let  $\phi(t)$ denote the formal power series $\sum_{n=0}^\infty a_nt^n$ over $\mathbb{C}$ with $\phi(0)=a_0=1$.

For every $j$ $(0\leq j\leq n)$ let $B(n,j)$ be equal to the following sum
\begin{align*}
B(n,j) = \sum_{1^{k_1}2^{k_2}\cdots n^{k_n}\in \sigma(n,n-j)}\frac{a_1^{k_1}a_2^{k_2}\cdots a_n^{k_n}}{k_1!k_2!\cdots k_n!}.
\end{align*}
Let $[t^n](\phi(t))^{\lambda}$ denote the coefficient of $t^n$ in $(\phi(t))^{\lambda}$. Then 
\begin{align*}
\frac{1}{(\lambda)_n}[t^n](\phi(t))^{\lambda}  = \sum_{j=0}^{m} \frac{B(n,j)}{(\lambda-n+j)_{j}}+ o\left(\frac{B(n,s)}{(\lambda-n+s)_{s}}\right).
\end{align*}

\begin{theorem}\label{theorem:200} Given $n\geq0$,	
	\begin{align*}
		\frac{S^{{\gamma k},\alpha,\beta}_{n,k,\ell}}{(k)_nn!}=\sum\limits_{\ell=0}^m\frac{B(n,\ell)}{(k-n+\ell)_{\ell}}+o\left(
			\frac{B(n,m)}{(k-n+m)_m}
		\right),
	\end{align*}
	where \begin{align*}B(n,j)=\sum\limits_{\sigma(n,n-j)}\prod\limits_{i=1}^n\frac{(k!)^{x_i}}{x_i!}\begin{bmatrix}
			\frac{S^{\gamma,\alpha,\beta}_{i,k,\ell}}{i!}
		\end{bmatrix}^{x_i},\end{align*} and $n=o(\sqrt{|k|})$ as  $|k|\rightarrow \infty$.	
\end{theorem}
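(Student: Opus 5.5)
The approach is to fit the generating function of Theorem~\ref{theorem:20} into the Hsu-type asymptotic expansion quoted just before the statement. We take $\lambda=k$ and choose $\phi(t)$ so that $(\phi(t))^k$ reproduces the generating function of $S^{\gamma k,\alpha,\beta}_{n,k,\ell}$ up to a normalising constant. Replacing $\gamma$ by $\gamma k$ in \eqref{equation:1} gives
\begin{align*}
\sum_{n\ge 0}S^{\gamma k,\alpha,\beta}_{n,k,\ell}\frac{t^n}{n!}
=\frac{1}{k!}\left(e^{\gamma t}\Bigl(e^t+\sum_{i=1}^{\ell}\frac{(\beta|\alpha)_i}{\beta}\frac{t^i}{i!}-\sum_{i=0}^{\ell}\frac{t^i}{i!}\Bigr)\right)^{k}.
\end{align*}
The factor $e^{\gamma k t}$ is absorbed into the $k$-th power, which is exactly why the statement uses the superscript $\gamma k$.

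\textbf{Step 1 (choice of $\phi$).} I would set $\phi(t)=1+\sum_{i\ge1}a_it^i$ with $a_i$ tied to the numbers $S^{\gamma,\alpha,\beta}_{i,k,\ell}/i!$ and a factor $k!$, matching the form of $B(n,j)$ in the statement. The key bookkeeping is to write the inner function as a power series and identify its coefficients with the combinatorial numbers, using the convolution of Theorem~\ref{theorem:13}. Then $[t^n](\phi(t))^k = S^{\gamma k,\alpha,\beta}_{n,k,\ell}\,k!/n!$, up to the normalisation the authors fix. One must ensure $\phi(0)=1$; the constant term of the bracket is $1-1=0$ when $\ell\ge0$, so I would normalise by the lowest nonvanishing term, or rescale by $k!$, so that the $a_0=1$ hypothesis holds.

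\textbf{Step 2 (expand the power).} Expanding $(\phi(t))^k$ by the multinomial theorem over partitions $1^{x_1}2^{x_2}\cdots n^{x_n}$ of $n$ with $j'$ parts gives
\begin{align*}
[t^n](\phi(t))^k=\sum_{j'}(k)_{j'}\sum_{\sigma(n,j')}\prod_i \frac{a_i^{x_i}}{x_i!}.
\end{align*}
Writing $j'=n-j$ and dividing by $(k)_n$ yields $(k)_{n-j}/(k)_n = 1/(k-n+j)_j$. This gives the exact identity
\begin{align*}
\frac{[t^n]\phi^k}{(k)_n}=\sum_{j=0}^{n}\frac{B(n,j)}{(k-n+j)_j}.
\end{align*}
Substituting the $a_i$ from Step 1 turns $B(n,j)$ into the displayed product involving $(k!)^{x_i}$ and $\bigl[S^{\gamma,\alpha,\beta}_{i,k,\ell}/i!\bigr]^{x_i}$. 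This produces the left side $S^{\gamma k,\alpha,\beta}_{n,k,\ell}/((k)_n n!)$ after the normalisation is tracked.

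\textbf{Step 3 (truncation).} The remaining task is to show that the tail $\sum_{j>m}$ is $o\bigl(B(n,m)/(k-n+m)_m\bigr)$ when $n=o(\sqrt{|k|})$. Here I would invoke the cited theorem of Hsu directly rather than reprove it. Its hypothesis is precisely $n=o(\sqrt{|\lambda|})$, since then $(k-n+j)_j\sim k^j$ and each successive term gains a factor $O(n^2/k)\to0$.

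The main obstacle is Step 1: getting $\phi(0)=1$ and matching the normalisation so that the coefficients $a_i$ are genuinely independent of $k$. Otherwise Hsu's theorem does not apply as stated. If the direct identification fails, I would instead take $a_i$ as the plain Taylor coefficients of $e^{\gamma t}(\cdots)$ divided by its leading term, and rewrite them afterwards in terms of $S^{\gamma,\alpha,\beta}_{i,\cdot,\ell}$.
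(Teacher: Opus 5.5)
Your Steps 2 and 3 are the standard derivation of Hsu's expansion and are fine in themselves. Step 1, however, is not merely the ``main obstacle'': it fails, and it cannot be repaired so as to give the displayed statement.

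Put $h(t)=e^t+\sum_{i=1}^{\ell}\frac{(\beta|\alpha)_i}{\beta}\frac{t^i}{i!}-\sum_{i=0}^{\ell}\frac{t^i}{i!}$ and $f(t)=e^{\gamma t}h(t)$, so that $k!\sum_{n}S^{\gamma k,\alpha,\beta}_{n,k,\ell}t^n/n!=f(t)^k$. Since $(\beta|\alpha)_1/\beta=1$ (and $h(t)=e^t-1$ if $\ell=0$), we have $f(t)=t\,g(t)$ with $g(0)=1$ and $g$ independent of $k$.
\begin{itemize}
\item Any $\phi$ with $\phi^k=f^k$ has $\phi(0)=0$, and no constant rescaling (in particular not by $k!$) fixes a zero constant term.
\item Your other candidate, $a_i=k!\,S^{\gamma,\alpha,\beta}_{i,k,\ell}/i!$ as the displayed $B(n,j)$ suggests, means $\phi(t)=1+e^{\gamma t}h(t)^k$. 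Its $k$-th power is not $f(t)^k$. Its coefficients depend on $k$, so Hsu's expansion, which needs a fixed $\phi$ while $\lambda=k\to\infty$, does not apply. There is also a stray factor $k!$ between $[t^n]\phi^k/(k)_n$ and the left-hand side.
\end{itemize}
The only legitimate normalisation is your fallback of dividing by the leading term $t$. It gives the exact identity
\begin{align*}
\frac{k!\,S^{\gamma k,\alpha,\beta}_{n,k,\ell}}{n!}=[t^{n-k}]\,g(t)^k .
\end{align*}
Steps 2--3 then run with $N=n-k$ in place of $n$: coefficients $a_i=S^{\gamma,\alpha,\beta}_{i+1,1,\ell}/(i+1)!$, normaliser $(k)_{n-k}$, denominators $(2k-n+j)_j$, and hypothesis $n-k=o(\sqrt{k})$. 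That is a different theorem, and it cannot be ``rewritten afterwards'' into the displayed one.

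In fact the displayed statement is degenerate, so no argument proves it as written. The regime $n=o(\sqrt{|k|})$ forces $n<k$, which contradicts $k\le n$ in Definition~\ref{definition:4}, and so $S^{\gamma k,\alpha,\beta}_{n,k,\ell}=0$. For $n\ge 1$, every partition in $\sigma(n,n-j)$ has at least one part, and all its parts satisfy $i\le n<k$. So each term of $B(n,j)$ carries a factor $S^{\gamma,\alpha,\beta}_{i,k,\ell}=0$, and both sides vanish identically. For $n=0$ the left side is $0$ while $B(0,0)=1$, so the expansion reads $0=1+o(1)$.

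The paper gives no argument beyond pointing to Hsu's expansion. It takes the same route as you and skips exactly the check that fails here: $\phi(0)=1$ with $k$-independent coefficients. What your Steps 2--3 actually deliver is the shifted expansion above, not the stated result.
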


For the numbers $S^{{\gamma},\alpha,\beta}_{n,k,\ell}$ the following identity holds (see \cite{hsu1990power}):

We now compute a few values of the numbers $B(n,j)$. We have

\begin{align*}
	B(n,0)=&\frac{1}{n!}\begin{Bmatrix}\frac{k!S^{\gamma,\alpha,\beta}_{1,k,\ell}}{1!}\end{Bmatrix}^{n},\\
	B(n,1)=&\frac{1}{(n-2)!}\begin{Bmatrix}\frac{k!S^{\gamma,\alpha,\beta}_{1,k,\ell}}{1!}\end{Bmatrix}^{n-2}\begin{Bmatrix}\frac{k!S^{\gamma,\alpha,\beta}_{2,k,\ell}}{2!}\end{Bmatrix},\\
	B(n,2)=&\frac{1}{(n-3)!}\begin{Bmatrix}\frac{k!S^{\gamma,\alpha,\beta}_{1,k,\ell}}{1!}\end{Bmatrix}^{n-3}\begin{Bmatrix}\frac{k!S^{\gamma,\alpha,\beta}_{3,k,\ell}}{3!}\end{Bmatrix}\\&+\frac{1}{2!(n-4)!}\begin{Bmatrix}\frac{k!S^{\gamma,\alpha,\beta}_{1,k,\ell}}{1!}\end{Bmatrix}^{n-4}\begin{Bmatrix}\frac{k!S^{\gamma,\alpha,\beta}_{2,k,\ell}}{2!}\end{Bmatrix}^2,\\
	B(n,3)=&\frac{1}{(n-4)!}\begin{Bmatrix}
		\frac{k!S^{\gamma,\alpha,\beta}_{1,k,\ell}}{1!}
	\end{Bmatrix}^{n-4}\begin{Bmatrix}
	\frac{k!S^{\gamma,\alpha,\beta}_{4,k,\ell}}{4!}
	\end{Bmatrix}\\&+\frac{1}{(n-5)!}\begin{Bmatrix}
	\frac{k!S^{\gamma,\alpha,\beta}_{1,k,\ell}}{1!}
	\end{Bmatrix}^{n-5}\begin{Bmatrix}
		\frac{k!S^{\gamma,\alpha,\beta}_{2,k,\ell}}{2!}
	\end{Bmatrix}\begin{Bmatrix}
	\frac{k!S^{\gamma,\alpha,\beta}_{3,k,\ell}}{3!}
	\end{Bmatrix}\\&+\frac{1}{3!(n-6)!}\begin{Bmatrix}
	\frac{k!S^{\gamma,\alpha,\beta}_{1,k,\ell}}{1!}
	\end{Bmatrix}^{n-6}\begin{Bmatrix}
	\frac{k!S^{\gamma,\alpha,\beta}_{2,k,\ell}}{2!}
	\end{Bmatrix}^3.
\end{align*}

Thus, we have the following result

\begin{align*}	\frac{S^{\gamma k,\alpha,\beta}_{n,k,\ell}}{(k)_nn!}\sim\: & (k)_nB(n,0)+(k)_{n-1}B(n,1)+(k)_{n-2}B(n,2)\\&+(k)_{n-3}B(n,3).\end{align*}

\section*{Acknowledgements}
The second author would like to thank the university of the Witwatersrand for Financial Support through the FR\&IC Start-up Research Grant.

\end{document}